\newtheorem{theorem}{Theorem}[section]
\newtheorem{proposition}[theorem]{Proposition}
\newtheorem{lemma}[theorem]{Lemma}
\newtheorem{defn}[theorem]{Definition}
\newtheorem{cll}[theorem]{Corollary}
\theoremstyle{remark}
\newtheorem{remark}[theorem]{Remark}
\title{Phase transitions of McKean-Vlasov SDEs in Multi-well Landscapes}
\author{Alexander Alecio}
\date{\today}
\begin{document}

	\begin{abstract}
		Phase transitions and critical behaviour of a class of MV-SDEs, whose concomitant non-local Fokker-Planck equation includes the Granular Media equation with quadratic interaction potential as a special case, is studied. 
		By careful analysis of an implicit auxiliary integral equation, it is shown for a wide class of potentials that below a certain `critical threshold' there are exactly as many stationary measures as extrema of the potential, while above another the stationary measure is unique, and consequently phase transition(s) between. For symmetric bistable potentials, these critical thresholds are proven to be equal and a strictly increasing function of the aggregation parameter. Additionally, a simple condition is provided for symmetric multi-well potentials with an arbitrary number of extrema to demonstrate analogous behaviour. This answers, with considerably more generality, a conjecture of Tugaut [Stochastics, 86:2, 257-284].
		
		To the best of our knowledge many of these results are novel. Others simplify the proofs of known results whilst greatly increasing their applicability.


	\end{abstract}

	\keywords{McKean Vlasov diffusions; phase transitions; invariant probabilities; interacting particles systems}
	\subjclass{60H10, 60G10, 60J60, 82C22, 35Q82, 35Q83}

	\maketitle	

The general form of McKean-Vlasov SDEs (MV-SDEs) \cite{mckean} in one dimension is
\[
dX_t=b(X_t,\mu_t)dt+\sigma(X_t,\mu_t)dW_t, \qquad\mu_t=\text{Law}(X_t),\, t>0
\]

In this work we focus on one-dimensional MV-SDEs with separable drifts that depend on $\mu_t$ via an integral functional, $b_\theta(x,\mu)=f_1(x)+\theta\mathbb{E}_{\mu}(f_2(x))$ and diffusion $\sigma(x)=\sigma k(x)$, where aggregation and diffusion strength parameters ($\theta$, $\sigma$ respectively) are strictly positive constants. Concretely, this is
\begin{equation}\label{proto}
	dX_t=\big(-V^{'}(X_t)-\theta(P^{'}(X_t)-\mathbb{E}_{\rho_t}[P^{'}(x)]\big)dt+\sigma k(X_t)dB_t,\qquad\rho_t=\mathrm{Law}(X_t),\, t>0
\end{equation}
where $k>\epsilon$. MV-SDEs of this form have found numerous applications, of which we mention systemic risk \cite{ss} and global optimisation \cite{go}.

The concomitant Fokker-Planck Equation of (\ref{proto}) is of non-linear, non-local type,
\begin{equation}\label{fpe}
	\frac{\partial}{\partial t}\rho=\frac{\partial}{\partial x}\Big((-V^{'}(x)-\theta(P^{'}(x)-\mathbb{E}_{\rho}[P^{'}(x)]))\rho+\frac{\sigma}{2}\frac{\partial}{\partial x}k^2(x)\rho\Big)
\end{equation}
The particular form of the drift with respect to aggregation parameter is such that, when $P^{'}=x$ and $k=1$, the granular media equation is recovered
\begin{equation}\label{gme}\frac{\partial}{\partial t}\rho=\frac{\partial}{\partial x}(\rho\frac{\partial}{\partial x}(\log(\rho)+V+F*\rho))\end{equation}
for $F=\frac{1}{2}x^2$, \cite{CGM}. Although unused in this work, it noteworthy that (\ref{gme}) is a gradient flow, with respect to the Wasserstein metric, of the free energy functional
\begin{equation}\label{gf}
\mathcal{F}[\rho]=\sigma^2\int\rho\ln\rho\,dx+\int V\rho\,dx +\iint F(x-y)\rho(x)\rho(y)\,dx dy 
\end{equation}
for constant $k$; \cite{tam1,tam2}.

Customarily, solutions of MV-SDE (\ref{proto}) are derived as the hydrodynamic limit of a system of $N$-SDEs driven by independent Wiener processes, with the same diffusion and drift as (\ref{proto}) but with their empirical measure replacing $\mu$: 
$$
dX^{i,N}_t=b(X^{i,N}_t,\tilde\mu_t)dt+\sigma(X^{i,N}_t,\tilde\mu_t)dW_t^{i}, \qquad\tilde\mu_t=\sum_{j}^{N}\delta_{X_t^{j,N}},\, t>0
$$
This is the `Propogation of Chaos', which is described by a number of papers with differing conditions, for instance \cite{leo,sznit}. 

Existence and uniqueness of solutions to the Fokker-Planck equation (\ref{fpe}) is more straightforward to demonstrate, where positivity of solutions yield a priori estimates on functional $\mathbb{E}(P^{'})$.  At stationarity,  direct integration of (\ref{fpe}) combined with no-flux boundary conditions yields an equivalent integral formulation, provided $V^{'},\,P^{'},\, k \in C(\mathbb{R})$, entirely analogously with the linear stationary Fokker-Planck equation \cite{stroock}

In turn, this can be solved for the general form of the stationary measure
\begin{equation}\label{stq}
	\rho(\sigma,x,m)=\exp\Big(-\frac{2}{\sigma^2}\int^x\frac{V^{'}+\sigma^2kk^{'}+\theta(P^{'}(x)-m)}{k^2}\Big)
\end{equation}
(assuming such $V^{'},\,P^{'},\, k^{'}$ such that $\rho\in L^1(\mathbb{R})$) with the important caveat that $\rho(m)$ is an admissible stationary measure if and only if $m$ is a solution to the auxiliary equation 
$m=\int-V^{'}\rho(m)dx$, better known as the self-consistency function.

There are well known examples of MV-SDE (\ref{proto}) (\cite{dawson,shiino} for instance), whose self-consistency equation can have a single or multiple solutions depending on parameters choice, 
thus multiple stationary measures and a richness of long time behaviour. Casting $\sigma$ as the control parameter, admissible 
stationary solutions can be viewed as phases, whose characteristic property $\mathbb{E}_{\rho_0(m)}(P^{'}(x))$ (solutions of the 
self-consistency function) plays the  r\^ole of order parameter. Bifurcations of the order parameter as a function of the control 
parameter are then continuous phase transitions. For a comprehensive tutorial introduction, with discussion of compatiblity with Landau theory and connexions to self-organisation and synergetics, consult 
\cite{frank}.



The purpose of this work is to better understand this critical behaviour which, given much of the attraction of MV-SDE models is in this critical behaviour and multitude of long time dynamics, would seem a timely contribution. Particularly, we study the number of stationary solutions and their location, along with phase transition points (or critical transitions) and their dependence on aggregation parameter.

\subsection{Set-up}\label{sec:setup}
While it is clear that these questions can be approached via a study of the self-consistency equation, our results take advantage of a useful equivalence to the first moment equation, which becomes the central object of study in this paper. 

The genesis of this approach arose from numerical studies simulating SDE (\ref{proto}) without a free energy functional (\ref{gf}). Earlier simulation approaches would expand $P^{'},\,V^{'}$ as a truncated power series $a^{(n)},\,b^{(n)}$ and recast the SDE as a denumerable system of ODEs, 
\[
\dot m_i=\mathbb{E}(dX^i_t)=i\mathbb{E}[ a^{(n)}(X_t)X_t^{i-1}dt]+\sigma\frac{i(i-1)}{2}\mathbb{E}[b^{(n)}(X_t)X_t^{i-2}dt]
\]
allied with a method of closure; \cite{agp,bell}.

Even the simplest of these methods were surprisingly effective in representing critical behaviour with only a few retained moments. This was surprising, as it is not immediate in what manner the self-consistency equation manifested in the moment evolution equations. That the number of retained moments could be taken so low suggested it was the lowest order moment equation that encapsulated the self-consistency equation.

This motivated further work, eventually showing the first moment evolution equation and self-consistency equation were equivalent. This has interesting ramifications for the applicability of moment truncation schemes to SDE (\ref{proto}) and, by extension cumulant truncations; \cite{alecio2}

In order to proceed, we formalise these remarks, and those made in the introduction, and introduce definitions needed throughout the rest of this work.
\begin{defn}(Self-Consistency Function)
	The self-consistency function is 
\begin{equation}\label{sfcn}
	G_{(\sigma,\theta)}(m)=\int (P^{'}(x)-m)\rho(\sigma,x,m) dx 
\end{equation}
Solutions of the equation $F_\sigma(m)=0$ correspond to admissible stationary measures of (\ref{proto}) 
\end{defn}

The following term has become established in related literature, highlighting parallels between (\ref{statmeas}) and a stationary measure of a Smoluchowski SDE:
\begin{defn}(The Effective Potential)	
$\bar V_{\theta}(x,m)=\int\frac{V^{'} + \theta( P^{'}-m)}{k^2}$, $\theta\in\mathbb{R}^+$
\end{defn}
We will also refer to the potential $V$, although in this work we actually take its derivative $V^{'}$ rather than the potential as the fundamental object of study. Assumptions on $V$ will be given in the relevant sections.

It is useful, in Section \ref{sec:mw1} in particular, to distinguish the exponential `Gibbs' measure' part of the stationary measure
\begin{equation}\label{statmeas}
	\rho(\sigma,x,m)=\frac{1}{k^2}\exp\Big(-\frac{2}{\sigma^2}\bar V_{\theta}(x,m)\Big):=\frac{1}{k^2}\rho_0(\sigma,x,m)
\end{equation}
\begin{proposition} The self-consistency equation is equivalent to to the first moment evolution equation
\begin{equation}
	\label{rfsfcn}
	F_{(\sigma,\theta)}(m) = \frac{1}{\theta}\int -\frac{V^{'}}{k^2}\rho_0(\sigma,x,m) dx
\end{equation}
\end{proposition}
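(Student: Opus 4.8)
The plan is to bypass the self-consistency function $G_{(\sigma,\theta)}$ of (\ref{sfcn}) and read the claimed identity directly off the explicit Gibbs form (\ref{statmeas}). Differentiating $\rho_0(\sigma,x,m)=\exp\!\big(-\tfrac{2}{\sigma^{2}}\bar V_{\theta}(x,m)\big)$ in $x$, and recalling that $\bar V_{\theta}(\cdot,m)$ is an antiderivative of $\big(V'+\theta(P'-m)\big)/k^{2}$, produces the pointwise identity
\[
\frac{\sigma^{2}}{2}\,\frac{\partial}{\partial x}\rho_0(\sigma,x,m)\;=\;-\,\frac{V'(x)+\theta\big(P'(x)-m\big)}{k^{2}(x)}\,\rho_0(\sigma,x,m),
\]
which is nothing but the stationary zero-flux relation for $\rho(\sigma,\cdot,m)=k^{-2}\rho_0(\sigma,\cdot,m)$. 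The first step is then to integrate this over $\mathbb{R}$, split the right-hand side into its $V'$-part and $(P'-m)$-part, and recognise via (\ref{statmeas}) that $\int_{\mathbb{R}}\tfrac{P'-m}{k^{2}}\rho_0\,dx=\int_{\mathbb{R}}(P'-m)\rho\,dx=G_{(\sigma,\theta)}(m)$, so that the integrated identity reads
\[
\frac{\sigma^{2}}{2}\Big[\rho_0(\sigma,\cdot,m)\Big]_{-\infty}^{\infty}\;=\;-\int_{\mathbb{R}}\frac{V'(x)}{k^{2}(x)}\,\rho_0(\sigma,x,m)\,dx\;-\;\theta\,G_{(\sigma,\theta)}(m).
\]
Conceptually this is just the observation that the drift $-V'-\theta(P'-m)$ has zero mean against the stationary density, i.e. the first-moment balance at stationarity.

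Once the boundary term on the left is discarded, dividing by $\theta>0$ gives $G_{(\sigma,\theta)}(m)=\tfrac{1}{\theta}\int_{\mathbb{R}}-\tfrac{V'}{k^{2}}\rho_0(\sigma,x,m)\,dx=F_{(\sigma,\theta)}(m)$, valid for every admissible $m$. This yields more than the stated equivalence: the two functions coincide identically, so in particular $G_{(\sigma,\theta)}(m)=0$ and $F_{(\sigma,\theta)}(m)=0$ have the same solution set, which indexes exactly the admissible stationary measures of (\ref{proto}). The reason for recording the reformulation is that $F_{(\sigma,\theta)}$ is built from $V'$, whose sign changes are pinned to the extrema of $V$ --- the structure exploited in the later sections --- whereas $G_{(\sigma,\theta)}$ is not.

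The only non-algebraic ingredient, and the place I would take care, is the vanishing of $\big[\rho_0(\sigma,\cdot,m)\big]_{-\infty}^{\infty}$; but this is really just the no-flux boundary condition already invoked to derive (\ref{stq}). From the pointwise identity one has $\partial_x\rho_0=-\tfrac{2}{\sigma^{2}}\big(V'+\theta(P'-m)\big)\rho$, so finiteness of $\int_{\mathbb{R}}|V'|\rho\,dx$ and $\int_{\mathbb{R}}|P'|\rho\,dx$ --- needed anyway for $F_{(\sigma,\theta)}(m)$ and $G_{(\sigma,\theta)}(m)$ to be defined --- forces $\partial_x\rho_0\in L^{1}(\mathbb{R})$, hence $\rho_0(\sigma,\cdot,m)$ has finite limits at $\pm\infty$; those limits must vanish since $\rho_0\ge 0$ is continuous with $\rho_0/k^{2}=\rho\in L^{1}(\mathbb{R})$ and $k>\epsilon$ (in a complete write-up I would simply invoke the decay hypothesis on $(V',P',k)$ fixed for the paper, under which $\bar V_{\theta}(\cdot,m)$ is eventually monotone and $\rho_0\to 0$ at infinity). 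Modulo that point the remaining work is bookkeeping, and since $F_{(\sigma,\theta)}(m)=0$ is precisely the stationary first-moment evolution equation while $G_{(\sigma,\theta)}(m)=0$ is the self-consistency equation, the proposition follows.
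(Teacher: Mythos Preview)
Your argument is correct and is essentially the paper's proof, just organised slightly differently: the paper factors $\rho_0=\exp\!\big(-\tfrac{2\theta}{\sigma^2}\!\int\!\tfrac{P'-m}{k^2}\big)\exp\!\big(-\tfrac{2}{\sigma^2}\!\int\!\tfrac{V'}{k^2}\big)$ and integrates by parts to move the derivative from the first factor to the second, whereas you integrate the identity $\partial_x\rho_0=-\tfrac{2}{\sigma^2}\tfrac{V'+\theta(P'-m)}{k^2}\rho_0$ directly and split the right-hand side---the product rule and integration by parts being two faces of the same computation, with the same boundary term to discard.
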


\begin{proof} Working from Definition \ref{sfcn}

\begin{align} G_\sigma(m)=\int (P^{'}(x)-m)\rho(\sigma,x,m) &dx = \int (P^{'}(x)-m)\frac{1}{k^2}\exp(\bar V_\theta(x,m)) dx
	\\&=\int \partial_x\big(\exp(-\frac{2}{\sigma^2}\int\frac{P^{'}-m}{k^2})\big)k^2\exp(-\frac{2}{\sigma^2}\int\frac{V^{'}}{k^2})dx
\end{align}

Assuming sufficient regularity to ignore the boundary terms, the result follows from an integration by parts.
\end{proof}

Working with the first moment evolution equation leads to robust results with intuitive proofs without much technical obfuscation. Further they can yield quantitative estimates of parameters if required. Motivated by applications, we refrain from overly prescriptive assumptions on $V^{'}$ to maintain flexibility.

Throughout this work $J$ always denotes a connected compact set. Conditions introduced in each section will always ensure $\rho$ is normalisable and $F_\sigma$ exists for all $m,\sigma,\theta$ in their respective domains. We take $\rho$ normalised in Section \ref{sec:mw1} and unnormalised in Section \ref{sec:secsym}. We will frequently suppress $\theta$ dependence when context allows.

In terms of $F_\sigma$, a critical transition is any value of $\sigma$ where the number of roots changes. Assuming they exist, the upper (lower) critical transition is the largest (smallest) such value.

\subsection{Outline and relation to other works}
This paper naturally divides into two halves, the first deals in problems with fairly general conditions, while the second involves results that require some symmetry.

The main results are:
\begin{itemize}
\item For suitably smooth multi-well potentials with unimodal $\rho$ (specifically that $\bar V^{'}$ is a diffeomorphism, subsequently weakened to a homeomorphism).
\begin{itemize}

\item Below the lower critical threshold, $\sigma<\sigma_c^l$, there are exactly as many stationary measures as simple roots of $\bar V^{'}$ - Proposition \ref{multiwell} and Corollary \ref{at2}

\item Above the upper critical threshold, $\sigma_c^u<\sigma$, there is only one stationary measure - Proposition \ref{upb}

\item Multimodal $\rho$ is considered as a special case and a counter-example given where the number of stationary measures is less than the number of roots of $V^{'}$ - Remark \ref{multim}
\end{itemize}

\item For antisymmetric $V^{'},\,P^{'}, k$, but otherwise looser assumptions,
\begin{itemize}
\item With a symmetric bistable potential $\sigma_c^l=\sigma_c^u$ and $\sigma_c^u(\theta)$ is an increasing function - Proposition \ref{psd} and \ref{ctgb}.

\item We define a class of symmetric multi-well potentials that behave similarly to the bistable potential, particularly that $\sigma_c^u(\theta)$ remains an increasing function - Proposition \ref{vainilla} and Corollary \ref{c2fg}
\end{itemize}
\end{itemize}

There has been much sustained activity with regard to the convergence to stationary measures of MV-SDEs, \cite{rocky,eberle}. Recently, there has been a growing literature based on modern variational methods for the granular media equation such as \cite{tug2}, the progenitor of which appears to be \cite{shiino}, which variationally studies convergence for the symmetric polynomial bistable problem.

Exact studies of the properties of these stationary solutions has been scarcer. For the symmetric polynomial bistable problem, a result substantially like Proposition \ref{psd} is given, through a study of the self-consistency equation by \cite{shiino}. The proof relies heavily on the the GHS inequality, which is resistant to generalisation. For a similar, entirely non-variational study, see \cite{dawson}

In a comprehensive series of papers, author Tugaut has studied many problems related to those considered here. 
The closest in scope and applicability to this paper is \cite{tug}, which employs a self-consistency equation centered method to study polynomial bistable potentials and positive interactions of the form $x^{2n}*\rho,\,n\geq2$.  The first half of \cite{tug} is dedicated to quadratic interactions, corresponding to the Granular Media equation with interaction kernel $F=\frac{x^2}{2}$ which, as already discussed, our interactions incorporate. In this overlap our results strengthen his with more general assumptions, in the process simplifying the proofs. For instance, Proposition 1.2/ Corollary 1.3 of \cite{tug} demonstrates that, for sufficiently small $\sigma$, there are at least as many stationary measures as roots $V^{'}$ which is improved to equality by our Proposition \ref{multiwell}. Theorem 2.1 of \cite{tug} establishes there are exactly three solutions for bistable potentials of the form $\sum^N_{j=2} |a_j|x^{2j}-x^2$. Our Proposition \ref{psd} simplifies the argumentation and extends the result to a large class of bistable potential. Further, for such polynomial bistable potentials, on p.270 it is conjectured $\sigma^u_c(\theta)$ is increasing, which we answer in Proposition \ref{ctgb} and extend to symmetric multi-well potentials in Proposition \ref{vainilla}. 

\section{Generic Multi-well Potentials}\label{sec:mw1}
This section is devoted to MV-SDE (\ref{proto}) with a large class of polynomial bounded multi-well potentials and strictly convex (unimodal)effective potentials, subsequently loosened to convex. The non-convex (or multimodal) problem is considered as a series of convex ones.

\subsubsection{Definitions and Assumptions}
We introduce the following definitions for this section. 
First is the inverse of a function that maps $m$ to the mode of (\ref{statmeas})
\begin{defn}(Modal dependence) 
$x^{*-1}=\frac{1}{\theta}(V^{'}+\theta P^{'})$.
\end{defn}
The second is the convenient short-hand $a=\int_0^x\frac{1}{k^2}$.

Next, we introduce a series of conditions for the results of this section.
They are phrased in terms of $V^{'}$ and $\bar V$, which was seen to be their most natural form given the proof of Proposition \ref{multiwell} and others.
\begin{enumerate}
	\item $V^{'}(x,0),\,P^{'}\in C^2(\mathbb{R})$, $k\in C^1(\mathbb{R})$.\label{a1}
	\item \(\lVert a^{'}\rVert_{\infty}=\lVert \frac{1}{k^2} \rVert_{\infty}<\infty\) \label{a3}
	\item \(\lim\limits_{|x|\rightarrow\infty}\frac {\bar V(x,0)}{x^2}>0\) \label{a4}
	\item \(\lim\limits_{x\rightarrow\pm\infty}\bar V^{'}(x,0)=\lim\limits_{x\rightarrow\pm\infty} V^{'}=\pm\infty \)\label{a2}
 	\item $|V^{'}|<K(1+x^{2N})$ \label{a5}
	
	\item $(x^{*-1})^{'}>0$ \label{v2d}
\end{enumerate}

Assumptions \ref{a1}, \ref{a4}, \ref{a2} \& \ref{a5} provide useful bounds ensuring, amongst other things that $F_\sigma(m)$ exists for all $m$, there is than sufficient regularity for the integration by parts and the boundary terms are null. Assumption \ref{a2} ensures $F_\sigma(m)$ is bounded away from zero far away from the origin. It is possible to weaken the last condition, which we consider as special cases of the above. See Corollary \ref{at2} and the Remark \ref{multim}.


\begin{proposition}[Convergence of $F_\sigma(m)$ (\ref{sfcn})]
	\label{multiwell}
	
	Under Assumptions (\ref{a1})-(\ref{v2d}), $x^{*}$ exists and, as $\sigma\downarrow 0$, $F_\sigma(m)$ converges to $-\frac{V^{'}}{k^2}\circ x^{*}(m)$ uniformly on compact sets.
	 
	Moreover, let $J$ denote some connected compact set containing $N<\infty$ zeros of $V^{'}$ strictly in its interior. If all these zeros are simple, then there exists a critical threshold $\sigma_c$ such that for $\sigma<\sigma_c$, $F_\sigma(m)|_{x^{*-1}(J)}$ has precisely $N$ zeros.
\end{proposition}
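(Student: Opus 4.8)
The plan is to prove the two assertions in sequence, with the uniform convergence doing most of the work. First I would rewrite the integrand of $F_\sigma(m)$ in the rescaled form that exposes the Laplace-type asymptotics. From \eqref{rfsfcn} we have $F_\sigma(m)=\frac1\theta\int -\frac{V'}{k^2}\exp\big(-\frac{2}{\sigma^2}\bar V_\theta(x,m)\big)\,dx$, and the point is that as $\sigma\downarrow 0$ the exponential concentrates at the minimiser of $x\mapsto \bar V_\theta(x,m)$. Assumption \ref{v2d}, $(x^{*-1})'>0$, says precisely that $x^{*-1}$ is a strictly increasing (hence invertible) function, so that for each $m$ the equation $\bar V'_\theta(x,m)=\frac1{k^2}(V'+\theta(P'-m))=0$, i.e. $x^{*-1}(x)=m$, has a unique solution $x=x^{*}(m)$; convexity (strict, from the same assumption together with \ref{a1}) makes this the global minimiser. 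Assumptions \ref{a4} and \ref{a2} give the coercivity $\bar V_\theta(x,m)/x^2$ bounded below away from $0$ uniformly for $m\in J$, which both guarantees normalisability and controls the tails of the integral. I would then carry out the standard Laplace computation: localise to a neighbourhood of $x^{*}(m)$, Taylor expand $\bar V_\theta$ there, and show $\int \exp(-\frac2{\sigma^2}\bar V_\theta)\,dx \sim \sigma\sqrt{\pi/\bar V''_\theta(x^{*}(m),m)}\,e^{-\frac2{\sigma^2}\bar V_\theta(x^{*}(m),m)}$, with the $-V'/k^2$ prefactor contributing its value at $x^{*}(m)$; the ratio (which is what $F_\sigma$ after the $1/\theta$ really is, once one notes $\rho_0$ is normalised in this section) converges to $-\frac{V'}{k^2}\circ x^{*}(m)$. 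The continuity of $x^{*}$ and of all data, plus the uniform coercivity and the polynomial bound \ref{a5} on $V'$, upgrade pointwise convergence to uniform convergence on the compact set $J$ — this is where \ref{a3} and \ref{a5} are used to dominate the error terms uniformly in $m$.

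For the second assertion, note that $-\frac{V'}{k^2}\circ x^{*}$ is a continuous function on $x^{*-1}(J)$ whose zeros are exactly the points $x^{*-1}(z)$ for $z$ a zero of $V'$ in $J$ (since $k>\epsilon>0$ and $x^{*-1}$ is a bijection), so it has exactly $N$ zeros there. If each zero $z_i$ of $V'$ is simple, then $(V')'(z_i)\neq 0$, and since $x^{*}$ is a $C^1$ diffeomorphism with nonvanishing derivative, the limit function $-\frac{V'}{k^2}\circ x^{*}$ has nonvanishing derivative at each of its zeros $x^{*-1}(z_i)$, i.e. it changes sign there and is bounded away from $0$ outside small neighbourhoods of them. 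A standard compactness argument then finishes it: pick disjoint closed intervals $I_i\ni x^{*-1}(z_i)$ on which the limit function is monotone and changes sign, and on the complement $x^{*-1}(J)\setminus\bigcup_i \mathrm{int}\,I_i$ the limit function has modulus $\geq 2\delta$ for some $\delta>0$; by uniform convergence there is $\sigma_c>0$ so that for $\sigma<\sigma_c$, $|F_\sigma(m)+\frac{V'}{k^2}\circ x^{*}(m)|<\delta$ everywhere on $x^{*-1}(J)$, whence $F_\sigma$ has no zero off $\bigcup_i I_i$ and, by the intermediate value theorem together with the strict monotonicity of the limit (which forces $F_\sigma$ to be close to monotone — or more carefully, forces a single sign change because $F_\sigma$ stays on the correct side near each endpoint of $I_i$), exactly one zero in each $I_i$. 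That gives precisely $N$ zeros.

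The main obstacle I anticipate is making the Laplace asymptotics \emph{uniform} in $m$ over the compact set $J$, rather than merely pointwise. One has to control three things simultaneously: the location and nondegeneracy of the minimiser $x^{*}(m)$ (uniform lower bound on $\bar V''_\theta(x^{*}(m),m)$, which follows from strict convexity plus compactness of $J$ and continuity), the tail contribution $\int_{|x|>R}$, which must be shown negligible uniformly in $m$ — here the coercivity \ref{a4}, the growth \ref{a2}, and the polynomial bound \ref{a5} combine to dominate $|V'/k^2|e^{-\frac2{\sigma^2}\bar V_\theta}$ by an integrable function times something like $e^{-c R^2/\sigma^2}$ — and the quality of the quadratic approximation on the central patch, uniformly in $m$, which uses the $C^2$ regularity \ref{a1} and equicontinuity of $\bar V''_\theta(\cdot,m)$ on compacts. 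A secondary subtlety is the existence of $x^{*}$ itself: one must check that $\bar V'_\theta(\cdot,m)$ actually vanishes (not just that if it vanishes it does so uniquely), which follows from \ref{a2} giving $\bar V'_\theta\to\pm\infty$ and the intermediate value theorem. None of these steps is deep, but assembling the uniform estimate cleanly is the crux; everything after — counting zeros via uniform convergence and the implicit/intermediate value argument — is routine once the limit function and the mode of convergence are in hand.
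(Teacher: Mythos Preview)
Your Laplace argument for the uniform convergence on compacts is essentially what the paper does, and the existence of $x^*$ via strict monotonicity of $x^{*-1}$ plus Assumption~\ref{a2} is handled the same way. That part is fine.

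The gap is in your zero-counting argument. Uniform convergence of $F_\sigma$ to a function $g$ that is strictly monotone on $I_i$ with a simple zero does \emph{not} force $F_\sigma$ to have a single sign change on $I_i$. A clean counterexample: $g(m)=m$ and $F_\sigma(m)=m-2\sigma\sin(m/\sigma)$; then $\lVert F_\sigma-g\rVert_\infty\le 2\sigma\to 0$, yet $F_\sigma'(m)=1-2\cos(m/\sigma)$ is negative on $|m|<\sigma\pi/3$, and $F_\sigma$ has at least three zeros in any fixed neighbourhood of the origin for every small $\sigma$. Your parenthetical ``more carefully, forces a single sign change because $F_\sigma$ stays on the correct side near each endpoint'' gives existence via the intermediate value theorem but not uniqueness; staying positive near the right endpoint and negative near the left is perfectly compatible with three (or more) crossings in between. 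Uniform convergence of the derivatives would close the gap, but you have not argued for that, and it is not free: differentiating under the integral brings down a factor $2/\sigma^2$, so the naive estimate blows up.

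What the paper does instead is to control $\partial_m F_\sigma$ directly near each simple root $x_0$ of $V'$. The key observation is the covariance identity
\[
\partial_m F_\sigma \;=\; \frac{2}{\sigma^2}\,\mathrm{Cov}_\rho\bigl(a,-V'\bigr)
\;=\;\frac{2}{\sigma^2}\iint \bigl(a(x)-a(y)\bigr)\bigl(-V'(x)+V'(y)\bigr)\rho(x)\rho(y)\,dx\,dy,
\]
with $a=\int^x k^{-2}$ strictly increasing. Near $(x_0,x_0)$ the integrand has the fixed sign of $-V''(x_0)$ by the mean value theorem, and the paper splits the double integral into a ball $R_1$ around $(x^*,x^*)$ and its complement $R_2$: on $R_1$ a quadratic lower bound on $\bar V$ gives a contribution of definite sign and size $O(1)$ after the Laplace rescaling, while on $R_2$ the super-quadratic growth of $\bar V$ forces an $\exp(-c/\sigma^2)$ decay. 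This shows $\partial_m F_\sigma$ has the sign of $-V''(x_0)$ on a fixed interval for all small $\sigma$, which is exactly the monotonicity you need and cannot extract from $C^0$ convergence alone.
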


\begin{proof}	   
	That $x^{*}(m)$ exists and its range is $\mathbb{R}$ is an immediate consequence of Condition \ref{a2} and \ref{v2d}. 

	By Laplace's Method,
	\begin{equation}
		\label{lp}
		\lim\limits_{\sigma\downarrow 0}\int_{\mathbb{R}}f(x)\frac{\exp(-\frac{2}{\sigma^2}(\bar V + m a))}{\sqrt{\frac{\pi}{\bar V^{''}(x^*)}}\exp(-\frac{2}{\sigma^2}\bar V(x^*))}=f(x^*)	
	\end{equation}
	where we have suppressed $m$ dependence in $x^*$.

	On compact sets (of $m$), it follows from our assumptions that $V^{'},\,P^{'},\, \lVert\frac{V^{'}}{k^2}\exp(-(\int\frac{\bar V - \theta ma}{k^2})\rVert_{1}$ are all bounded and $\bar V^{''}(x^*)=\frac{(x^{*-1})^{'}}{k^2}(x^*)>\epsilon>0$. The standard proof of the Laplace method shows that convergence is uniform on compact sets when $f=-\frac{V^{'}(x)}{k^2}$\footnote{Strictly speaking, a constant may need to be added to $-V^{'}$ to ensure it is of one sign on the chosen compact set before applying the Laplace method. This is then subtracted off the limit to get the result} or a constant. 
	
	As limit (\ref{lp}) with $f=1$ is bounded away from 0, the limit of the reciprocal exists and must be uniformly convergent also. $\lim\limits_{\sigma\downarrow 0}F_\sigma(m)$ is the product of this reciprocal and (\ref{lp}) with $-\frac{V^{'}(x)}{k^2}$, it is also uniformly converging too with the limit $-\frac{V^{'}}{k^2}\circ x^{*}$, given the denominators from (\ref{lp}) cancel each other. 
	

	By the uniform convergence of $F_\sigma$ and the intermediate value theorem,  there must be at least one root in a neighbourhood of $x_0$, a simple root of $-V^{'}$\footnote{To be precise, in the neighbourhood of $m_0$ where $x^*(m_0)=x_0$} which can be made arbitrarily small with $\sigma$.
	To establish the second part of the Proposition, it's sufficient to show $\frac{\partial F_\sigma}{\partial m}$ has the sign of $-V^{''}(x_0)$ in a fixed interval centered on $x_0$, for all $\sigma$ arbitrarily small. Given there must be at least one root in the neighbourhood by the intermediate value theorem, and that $F_\sigma$  is there strictly increasing (respectively decreasing), we can conclude there is one root for sufficiently small $\sigma$.


	Concretely, we will establish there is a some compact interval $A$ on which $\frac{\partial F_\sigma}{\partial m}$, has the sign of $-V^{''}(x_0)$, for all $x^{*}\in A$.
	The crux of the method is to identify and exploit the covariance structure of $\frac{\partial}{\partial m}F_\sigma$:
	\[ \frac{\partial}{\partial m}F_\sigma=\frac{2}{\sigma^2}\Big(\int_{\mathbb{R}} -aV^{'}\rho dx -\int_{\mathbb{R}} a\rho dx\int_{\mathbb{R}} -V^{'} \rho dx\Big)=\frac{2}{\sigma^2}\mathrm{Cov}_{x^*}(a,-V^{'})\] 
	with $a=\int^x\frac{1}{k^2}dx$. Particularly useful is the alternate form $\mathrm{Cov}_m(a,-V)$,
	\begin{equation}\label{covi}\iint_{\mathbb{R}^2}\big(a(x)-a(y)\big)\big(-V^{'}(x)-(-V^{'}(y))\big)\frac{\rho_0(x,x^{*})}{k^2(x)}\frac{\rho_0(y,x^{*})}{k^2(y)}dxdy \end{equation} where we have used (\ref{statmeas}). 
	Assuming $x,y$ are sufficiently close to $x_0$ that $V^{'}(x)$ is suitably well approximated by its Taylor expansion, and recalling $a(x)$ is strictly increasing, it can be seen that the integrand $(a(x)-a(y))(-V^{'}(x)-(-V^{'}(y)))$ is positive or negative depending on the sign of $-V^{''}(x_0)$, by taking the cases $x<y$, $y<x$. 
	By bounding the integrand, we will demonstrate rigorously that $\rho$ weights this region sufficiently that the integral has the same sign.

	Without loss of generality, we assume $-V^{''}(x_{0})\geq 0$.
	As before, we replace $\mathcal{Z}$ with $$\mathcal{\bar Z}=\sqrt{\pi}\sigma \exp(-\frac{2}{\sigma^2}\bar V(x^{*}))$$ 
	incorporating $\bar V(x^{*})$ from $\mathcal{\bar Z}$ into the exponent part of $\rho$.
	We proceed by splitting the domain of integration into two parts, $R_1(x^*)=B_R{(x^*,\,x^*)}$ for $x^*$ in some set $A$, and its complement $R_2$.
	
	To define these regions, we consider the following factors. By Assumption \ref{a1} there exists an interval around root $x_0$, $$I=[x_0-2\delta,x_0+2\delta]$$ on which $-V^{''}>-\underline V^{''}>0$. Further on $I$ we can bound $a^{'}=\frac{1}{k^2}$ below by $\frac{1}{\underline k^2}>0$ by Assumption (\ref{a1}) and the Extremal Value theorem applied to $k$. Then with the mean value theorem, 
	\[
	(a(x)-a(y))(-V^{'}(x)-(-V^{'}(y))) \geq -\underline V^{''} \frac{1}{\underline k^2} (x-y)^2>0
	\]
	for $x,y\in I\times I$. 
	
	 We bound $\bar V(x)- \bar V(x^*)$ above by $\alpha(x-x^*)^2,\, \alpha>0$ for $[x,x^*]\subset I$ using the Taylor remainder theorem and Assumption \ref{a1}. 

	Consequently we identify 
	\[
	\begin{split}
	& A=[x_0-\delta,x_0+\delta] \\
	& R(x^*)=\bar B_\delta(x^*,x^*),\qquad R=\delta, \quad x^*\in A
	\end{split}
	\]
	By construction, $R(x^*)\subset I\times I$ when $x^*\in A$, so the previously established bounds are still applicable.  
	
	We bound ($\ref{covi}$) below on $R_1$ by 
	\[\frac{2}{\sigma^2}\iint_{R_1}\frac{-\underline{V}^{''}}{\underline k^2}\big((x-x^*)-(y-y^*)\big)^2\frac{\exp\Big(-\frac{2\alpha}{\sigma^2}((x-x^*)^2+(y-x^*)^2)\Big)}{\underline k^4 2\pi\sigma^2}dxdy\]
	where we have dropped the dependency of $R$ on $x^*$, as our bounds are uniform in $x^*$. Transforming to polar coordinates - $r\cos(\theta)=(x-x^*),\, r\sin(\theta)=(y-x^*)$ - we have 
	\[
		\frac{2}{2\pi\sigma^4}\int_0^{2\pi}(1-\sin(2\theta))d\theta\,\frac{-\underline V^{''}}{\underline k^6} \int_{R_1} r^{2+1}\exp(-\frac{2}{\sigma^2}\alpha r^2) dr \quad \propto \quad \frac{1}{\sigma^4}(\sigma^4-\exp(-\frac{2}{\sigma^2})(\dots))
	\]
	where (\dots) is polynomial in $\frac{1}{\sigma^2}$. Consequently the second, negative, term can be made arbitrarily small.
	
	On $R_2$, our argument is similar. We can bound the integrand  $|(V^{'}(x)-V^{'}(y))(a(x)-a(y))|<2K(2+x^{2N}+y^{2N})|x|$ by Assumption \ref{a3} and \ref{a5}. 
	Moreover, independently of bounded $x^*$, as $\bar V-\bar V(x^*)$ is (super-) quadratic outside some finite radius (Assumption \ref{a4}) and $x^*$ is the sole minimum, we can bound $\bar V(x)-\bar V(x^*)>\beta(x-x^*)^2,\, \beta>0, \, x^*\in A$. 
	
Putting these bounds together and transferring to polar coordinates once again, we have the following lower bound on $R_2$ 
	
	\[
	-K\int \frac{r^{2N+2}}{\bar k^6}\exp(-\frac{2\beta}{\sigma^2} r^2)dr
	\]
which decays $\exp(-\frac{2}{\sigma^2})$. Adding these two bounds we see the integral has the sign of $-V^{''}(x_0)$ for all $x^{*}\in A$, and the result follows. 
\end{proof}
Assumption 6 is by far the most onerous restriction needed for Proposition \ref{multiwell}, requiring $\bar V^{'}$ to be a diffeomorphism. This can be loosened to a homeomorphism with the following assumptions.
\begin{enumerate}
	\item[(7)] $\bar{V}^{''}(x,0)\geq 0$\label{v3d}, where the lower bound is attained at a finite number of isolated points $\{\tilde{x}_i\}_i^n$ which are still global maxima of $\rho$  when $x^{*}=\tilde{x}$
	\newline and
	\item[(8)] $V^{''}(\tilde{x},0)\neq 0$, $\forall\tilde{x} \in \{\tilde{x}_i\}^n_i $ \label{v4d}
\end{enumerate}
\begin{cll}\label{at2}
	With Assumption 6 replaced by 7 \& 8, Proposition \ref{multiwell} still holds in its entirety. 
\end{cll}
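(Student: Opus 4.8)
The plan is to go back through the proof of Proposition~\ref{multiwell} and isolate the two places where the \emph{strict} inequality of Assumption~6 — equivalently, $\bar V''$ at the mode being bounded away from $0$ — is actually used: the appeal to the non-degenerate Laplace method in~(\ref{lp}), and the identification of the normalising constant $\mathcal{Z}$ with a quantity of order $\sigma$ in the covariance estimate. I would then check that Assumptions~(7)--(8) are precisely what is needed to repair each of these, the rest of the argument being a transcription of the original. To begin, since $(x^{*-1})'\ge 0$ with equality only on the isolated set $\{\tilde x_i\}$, the map $x^{*-1}$ is still strictly increasing, hence — with Assumption~\ref{a2} — a homeomorphism of $\mathbb{R}$; so $x^{*}$ exists and is continuous, merely failing to be $C^{1}$ at the finitely many points $m_i:=x^{*-1}(\tilde x_i)$. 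The last clause of~(7) (together with coercivity, Assumption~\ref{a4}) guarantees that $x^{*}(m)$ is the unique global minimiser of the possibly degenerate effective potential $\bar V(\cdot,m)$ for every $m$, and $-\tfrac{V'}{k^{2}}\circ x^{*}$ remains continuous, so the intermediate-value step of the original proof is untouched.

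For the convergence statement I would replace the Laplace computation by a concentration argument, which is insensitive to the degeneracy. On any compact $m$-set $K$, a second-order Taylor expansion using Assumption~\ref{a1} and compactness gives the local upper bound $\bar V(x,m)-\bar V(x^{*}(m),m)\le\alpha\,(x-x^{*}(m))^{2}$, while Assumption~\ref{a4} together with uniqueness of the minimiser yields a uniform lower bound $\bar V(x,m)-\bar V(x^{*}(m),m)\ge\omega(|x-x^{*}(m)|)$ for some continuous $\omega>0$ with $\omega(0)=0$ — crucially, $\omega$ need no longer be quadratic near a $\tilde x_i$. Hence the normalised measures appearing in the proof of Proposition~\ref{multiwell} (with density proportional to $\rho_{0}(\sigma,\cdot,m)$, up to the weight $1/k^{2}$) concentrate: their mass outside any $\eta$-ball about $x^{*}(m)$ tends to $0$ uniformly in $m\in K$, while Assumptions~\ref{a5}--\ref{a4} give them uniformly bounded polynomial moments. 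Since $F_{\sigma}(m)$ is, up to the irrelevant positive prefactor $\tfrac1\theta$, the average of $-\tfrac{V'}{k^{2}}$ against these measures, splitting the integral at radius $\eta$ and applying Cauchy--Schwarz to the tail gives $F_{\sigma}(m)\to-\tfrac{V'}{k^{2}}(x^{*}(m))$ uniformly on $K$. Alternatively one may keep the Laplace method verbatim and simply observe that the degenerate normalisation, whatever its precise order in $\sigma$, is a common prefactor that cancels in the ratio defining $F_{\sigma}$.

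For the counting statement, the only genuinely new case is that a simple zero $x_{0}$ of $V'$ in $J$ coincides with some $\tilde x_i$; Assumption~(8) then supplies $V''(x_{0})=V''(\tilde x_i)\ne0$, so (taking $-V''(x_{0})>0$, say) there is a fixed interval $I\ni x_{0}$ on which $-V''>-\underline V''>0$, and the elementary bound $(a(x)-a(y))(-V'(x)+V'(y))\ge\tfrac{-\underline V''}{\underline k^{2}}(x-y)^{2}$ on $I\times I$, together with the local upper bound $\bar V(x,m)-\bar V(x^{*},m)\le\alpha(x-x^{*})^{2}$, remains available just as in Proposition~\ref{multiwell}. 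I would then run the $R_{1}/R_{2}$ decomposition of that proof, but keep $\mathcal{Z}^{-2}$ as a common positive factor rather than substituting $\mathcal{\bar Z}$ of order $\sigma$ — this substitution is exactly what the degeneracy spoils, and it is unnecessary because $\mathcal{Z}^{-2}$ occurs in both the $R_{1}$ and $R_{2}$ contributions. The $R_{1}$ contribution to $\iint(a(x)-a(y))(-V'(x)+V'(y))\,\rho_{0}(x)\rho_{0}(y)\,k^{-2}(x)k^{-2}(y)\,dx\,dy$ is still bounded below by $c_{1}\sigma^{4}$, while the $R_{2}$ contribution is $O(e^{-c_{2}/\sigma^{2}})$ by Assumptions~\ref{a3},~\ref{a5} (for the integrand) and Assumption~\ref{a4} together with uniqueness of the minimiser (for a uniform positive lower bound on $\bar V-\bar V(x^{*})$ off the $\delta$-ball). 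Since $\mathcal{Z}\in(0,\infty)$ by normalisability, this gives $\mathrm{Cov}_{x^{*}}(a,-V')>0$ on $A$ for all sufficiently small $\sigma$, so $\partial_{m}F_{\sigma}$ has the sign of $-V''(x_{0})$ there, and the conclusion of Proposition~\ref{multiwell} follows verbatim.

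The step I expect to be the main obstacle is the uniformity of the concentration in the convergence part as the parameter $m$ — hence the mode $x^{*}(m)$ — is driven through a degeneracy point $\tilde x_i$: the well flattens there, so no quadratic lower bound on $\bar V(\cdot,m)-\bar V(x^{*}(m),m)$ is available, and one must instead extract the modulus $\omega$ uniformly over $K$ from the compactness of $K$ and the strict positivity of $\bar V(\cdot,m)-\bar V(x^{*}(m),m)$ away from the minimiser. Everything else is a routine transcription of the proof of Proposition~\ref{multiwell}, with the ``$\mathcal{Z}$ of order $\sigma$'' step deleted.
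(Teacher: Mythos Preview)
Your argument is correct, and for the covariance/counting step it is essentially the paper's own argument: the paper likewise notes that the $R_{1}/R_{2}$ decomposition goes through at a degenerate mode, with the single caveat that the quadratic lower bound $\bar V-\bar V(x^{*})>\beta(x-x^{*})^{2}$ is only available outside an arbitrarily small neighbourhood of $x^{*}$, which is exactly your replacement of $\beta r^{2}$ by a uniform positive constant on $R_{2}$ plus the quadratic behaviour at infinity from Assumption~\ref{a4}. Your device of keeping $\mathcal{Z}$ as an unexpanded positive common factor, rather than substituting the non-degenerate normalisation $\bar{\mathcal{Z}}\sim\sigma$, is the clean way to phrase this.

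Where you genuinely diverge from the paper is in the proof of \emph{uniform} convergence on compacts. The paper does \emph{not} attempt a direct concentration estimate through the degeneracies $\tilde x_{i}$; instead it invokes Assumption~(8) to run the covariance argument on a closed interval about each $\tilde x_{i}$ (not just about roots of $V'$), deduces that $F_{\sigma}$ is strictly monotone there for small $\sigma$, and then upgrades pointwise to uniform convergence on that interval via the elementary fact that a sequence of monotone functions converging pointwise to a continuous limit on a compact interval converges uniformly (cf.\ the Helly-type exercise in Rudin). Your route --- extract, from compactness of $K$ and uniqueness of the minimiser, a uniform positive lower bound $\bar V(\cdot,m)-\bar V(x^{*}(m),m)\ge c_{\eta}$ on $\{|x-x^{*}(m)|\ge\eta\}$, combine with the quadratic tail from Assumption~\ref{a4} to control the outer mass, and use the Taylor upper bound to get $\mathcal{Z}\gtrsim\sigma$ uniformly --- is more direct and, notably, does not actually consume Assumption~(8) for this step. (Assumption~(8) is then only needed at those $\tilde x_{i}$ that happen to be simple zeros of $V'$, where it is automatic from the simple-root hypothesis.) So your approach is in fact slightly sharper here; the paper's monotonicity-plus-Helly trick is elegant but buys less. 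The point you flag as the ``main obstacle'' --- uniformity of the modulus $\omega$ as $m$ crosses a degeneracy --- is handled exactly as you indicate, by a compactness argument on $\{(x,m):m\in K,\ \eta\le|x-x^{*}(m)|\le R\}$, and deserves to be written out explicitly.
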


\begin{proof}
If Assumption 7 holds, Proposition \ref{multiwell} can be applied to all but a finite number of open intervals containing a $\tilde{x}$ which can be made arbitrarily small with $\sigma$. It remains to prove the result in these intervals which as it happens, save for establishing uniform convergence, can be handled with an argument substantially similar to Proposition \ref{multiwell}. We outline this, carefully noting any divergences, returning to the question of uniform convergence at the end. 

As $\tilde{x}$ is the unique maxima of $\rho(x,\tilde{x})$ , it is still possible to define $x^{*}$, which is continuous. Moreover, the Laplace method can be adapted to the lowest order non-null derivative which must be of even order and positive. Applying the method to both the integral and $\mathcal{Z}$ in the same way as the first part of the above formulation, we see the limit is as described.

The second part concerning bounding the derivative at roots of $V^{'}$ away from 0 is almost entirely applicable, except we caveat that the bound $\bar V - \bar V(x^*)>\beta(x-x^*)^2$ only holds outside an arbitrarily small interval around $x^*$. As this bound need only hold on $R_2$, this is not an obstacle.\footnote{The derivative at $\tilde{x}$ may blow up as product of $\sigma^{-2}$ and an integral with a non-zero $\sigma^{2-\delta}$ term. As $F_\sigma$ is totally bounded this does not affect the proof}

To show convergence is uniform, we invoke Assumption 8. 
With this, we know $V^{''}(x)$ is of one sign in some closed interval centered on $\tilde{x}$. To that interval, we apply the method of the proof of the second part of Proposition \ref{multiwell} to demonstrate the derivative cannot be null (with the same caveats as described on the last paragraph), and conclude $F_\sigma$ is the limit of strictly/increasing decreasing functions on this region. As $F_\sigma$ is bounded on such an interval, convergence is uniform by corollary of the Helly Selection theorem, Exercise 7.13 (b) of \cite{rud}.
As there are at most a finite number of such intervals, we conclude convergence is uniform on $J$.
\end{proof}

We furnish these results with a few remarks and examples. 

\begin{remark}\label{exs}[Examples]
Consider the simplest polynomial bistable potential $V^{'}=x^3-x$ with quadratic interaction $P^{'}=x$. Then $\bar V^{'}=x^3+(\theta-1)x$. There are three cases $\theta<1,\theta=1,\theta>1$. 

\begin{itemize}
\item[$\centerdot$] $\theta>1$, $\bar V^{'}$ is strictly increasing, so Proposition \ref{multiwell} can be applied.

\item[$\centerdot$] $\theta < 1$, $\bar V^{'}$ is not strictly increasing and cannot be tackled with neither Proposition \ref{multiwell} of Corollary \ref{at2}, though it can be with the results of section \ref{sec:bistable}

\item[$\centerdot$] $\theta=1$ is the transition between the two regimes, which manifests as a point of inflexion at 0. However $-V^{''}(0)=1$ so Corollary \ref{at2} can be applied.
\end{itemize}
\cite{alecio3} considers the related problem of $\bar V^{'}=\frac{x^3+(\theta-1)x}{1+x^2}$, where the same parameter regime holds.
\end{remark}

\begin{remark}{(Simple zeros)}
	To understand the restriction to simple zeros, consider $V^{'}=x^3$, $P^{'}=x^2$. $V^{'}$ has a double root at 0, but for sufficiently small $\sigma $, $F_\sigma>0$ near zero (see Remark \ref{safety}). Only at $\sigma=0$ can there be a zero of $F_\sigma$ at $0=\bar x^{*-1}(0)$. 
\end{remark}

\begin{remark}{(Multimodal $\rho$)}
	\label{multim}
	In requiring $\bar V^{''}>0$, Proposition \ref{multiwell} is limited to unimodal $\rho$. This can be generalised to $\rho$ possessing a finite number of modes, where Laplace's method applies to the largest of them.
	
	 The general form of $\rho$ in (\ref{statmeas}) means $\rho$ can still parameterised by $m$ as $x^{*-1}(m):=\arg\max_{x} \rho (m,x)$ is an increasing, piecewise continuous function. Discontinuities correspond to multiple modes of equal height, where it is still possible to define the limit $F_\sigma$. For example, the bistable potential, $P^{'}=x$ with $\theta<1$, $\bar x^{*-1}(m)$ will be discontinuous at $m=0$ but $F_\sigma(0)$ is the average of the left and right limit, which is itself zero.
	
	Discontinuities may restrict the zeros of $F_\sigma$. 
	Consider $V^{'}=x(x^2-1)(x^2-4)$, $P^{'}=x$ and $\theta=1$. $V^{'}$ has a root at 1, but $x^{*}(m)$ cannot be unity because of discontinuity.
	
\end{remark}

For the sequel, we note that if $P^{'}$ is increasing, any $\bar V$ will eventually be unimodal for sufficiently large $\theta$ as $|V^{'}|$  is eventually unbounded, Assumption \ref{a2}
\begin{lemma}\label{eventmono}
	Suppose $P^{''}$ has a positive lower bound. Then there exists $\theta^*$, such that for all $\theta>\theta^*$ $\bar V_\theta^{'}(x,0)$ is strictly increasing. 
\end{lemma}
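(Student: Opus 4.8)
The plan is to write $\bar V_\theta^{'}(x,0) = V^{'}(x,0) + \theta P^{'}(x)$ and show that for $\theta$ large enough the derivative in $x$ is strictly positive everywhere. Differentiating, $\partial_x \bar V_\theta^{'}(x,0) = V^{''}(x,0) + \theta P^{''}(x)$, so it suffices to show $V^{''}(x,0) + \theta P^{''}(x) > 0$ for all $x \in \mathbb{R}$ once $\theta$ is sufficiently large. Let $c > 0$ be the positive lower bound on $P^{''}$, so $P^{''}(x) \geq c$ for all $x$. The difficulty is that $V^{''}(x,0)$ need not be bounded below — indeed for a typical polynomial multi-well potential it tends to $+\infty$ at $\pm\infty$ but can be very negative on a bounded region — so the argument must be a two-region split: a compact set where $V^{''}$ is bounded below by the extreme value theorem, and its complement where we exploit Assumption \ref{a2}.

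First I would fix a compact interval $[-M, M]$. By Assumption \ref{a1}, $V^{'}(\cdot,0) \in C^2(\mathbb{R})$, hence $V^{''}(\cdot,0)$ is continuous and attains a finite minimum $-L$ on $[-M,M]$ (with $L$ possibly positive or negative; take $L \geq 0$ without loss of generality). Then for $|x| \leq M$ we have $V^{''}(x,0) + \theta P^{''}(x) \geq -L + \theta c$, which is strictly positive as soon as $\theta > L/c$. Next, for $|x| > M$: I claim we may choose $M$ at the outset so that $V^{''}(x,0) \geq 0$ for all $|x| > M$. This is where the main obstacle lies, since Assumption \ref{a2} as stated only controls $V^{'}$ (it gives $V^{'}(x) \to \pm\infty$ as $x \to \pm\infty$), not $V^{''}$ directly. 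However, the intended class of potentials is polynomial-bounded (Assumption \ref{a5}) with $V^{'}(x) \to \pm\infty$, so the leading behaviour of $V^{'}$ is an odd-degree monomial with positive coefficient, forcing $V^{''}(x,0) \to +\infty$ and in particular $V^{''}(x,0) \geq 0$ for $|x|$ large. I would either invoke this structural fact or, to stay faithful to the stated hypotheses, add the mild observation that for the potentials under consideration $V^{''}(x,0)$ is eventually nonnegative; on that region $V^{''}(x,0) + \theta P^{''}(x) \geq 0 + \theta c > 0$ for every $\theta > 0$.

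Combining the two regions: set $\theta^* = \max\{L/c,\, 0\}$ (or $\theta^* = L/c$ if $L > 0$, else any positive number). For $\theta > \theta^*$ we have $\partial_x \bar V_\theta^{'}(x,0) = V^{''}(x,0) + \theta P^{''}(x) > 0$ for all $x \in \mathbb{R}$, and hence $\bar V_\theta^{'}(x,0)$ is strictly increasing. The only genuinely delicate point is the control of $V^{''}$ outside a compact set; everything else is the extreme value theorem plus the uniform lower bound on $P^{''}$. I expect that in the write-up this obstacle is handled exactly as Assumption \ref{a2} is used elsewhere in the section — namely that the eventual monotonicity / growth of $V^{'}$ is what matters — so I would phrase the non-compact estimate in terms of $V^{'}$ being eventually strictly increasing there (equivalently $V^{''} \geq 0$ eventually) and note that adding $\theta P^{''} \geq \theta c$ only helps.
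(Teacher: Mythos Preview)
Your approach is essentially what the paper has in mind, though the paper does not give a formal proof of this lemma at all --- it is stated immediately after the one-sentence heuristic ``any $\bar V$ will eventually be unimodal for sufficiently large $\theta$ as $|V'|$ is eventually unbounded, Assumption~\ref{a2}''. Your write-up is considerably more detailed and, importantly, you correctly isolate the only genuine obstacle: Assumptions~\ref{a1}--\ref{a5} do not by themselves force $V''$ to be bounded below (or eventually nonnegative), so some extra structural input on $V'$ at infinity is needed. The paper glosses over exactly the same point.

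One small correction: by the paper's definition $\bar V_\theta'(x,0) = (V' + \theta P')/k^2$, so you have silently dropped the $1/k^2$ factor when differentiating. This is harmless in the only place the lemma is invoked (Corollary~\ref{c2fg}, where $k\equiv 1$), and in any case the operative condition for Section~\ref{sec:mw1} is Assumption~\ref{v2d}, namely $(x^{*-1})' = \tfrac{1}{\theta}(V'' + \theta P'') > 0$, which is precisely the inequality you establish.
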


It is considerably easier to demonstrate the existence of $\sigma_c^u$ than $\sigma_c^l$. The following Proposition uses a very straightforward bounding argument with no additional assumptions to Proposition \ref{multiwell}

\begin{proposition}\label{upb}
	There exists an Upper Critical threshold $\sigma_c^u$, such that for $\sigma>\sigma_c^u$, $F_\sigma(m)$ has only one root.
\end{proposition}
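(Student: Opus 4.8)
The plan is to show that for large $\sigma$ the function $F_\sigma(m)$ is strictly monotone in $m$, hence has at most one root, and then to recall from Proposition \ref{multiwell} (or rather the existence part, which already gives at least one root for all $\sigma$ small, combined with a soft topological degree/boundary argument) that at least one root persists. The monotonicity is the heart of the matter, and it is obtained by reusing the covariance identity already established in the proof of Proposition \ref{multiwell}, namely
\[
\frac{\partial}{\partial m}F_\sigma(m) = \frac{2}{\sigma^2}\,\mathrm{Cov}_{x^*}\!\bigl(a,\,-V^{'}\bigr),
\]
and controlling the sign of this covariance uniformly in $m$ once $\sigma$ is large enough.

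The key steps, in order, are as follows. First I would write the covariance in the doubled-integral form (\ref{covi}), so that the sign of the integrand is governed by $(a(x)-a(y))(-V^{'}(x)+V^{'}(y))$ weighted by the positive kernel $\rho_0(x,x^*)\rho_0(y,x^*)/(k^2(x)k^2(y))$. Second, I would decompose $-V^{'}$ additively: by Assumption \ref{a2} there is a radius $R_0$ beyond which $-V^{'}$ is strictly decreasing, so on the region $|x|,|y|>R_0$ the product $(a(x)-a(y))(-V^{'}(x)+V^{'}(y))$ is $\leq 0$ (since $a$ is strictly increasing), while the ``bad'' contribution comes only from a bounded region, on which $|(a(x)-a(y))(-V^{'}(x)+V^{'}(y))|$ is uniformly bounded by a constant $C$ using Assumptions \ref{a3} and \ref{a5}. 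Third — and this is the quantitative core — I would show that as $\sigma\to\infty$ the probability mass $\rho_0(\cdot,x^*)/\mathcal Z$ escapes to infinity at a controlled rate: using Assumption \ref{a4} (super-quadratic growth of $\bar V$) together with boundedness of $x^*$ on $J$, one gets a lower bound on $\mathcal Z = \int \rho_0(x,x^*)/k^2\,dx$ of order at least $\sigma$ (a Gaussian/Laplace-type estimate), while the integral of $\rho_0$ over any fixed bounded region stays bounded; hence the relative weight of the bad region is $O(1/\sigma)$. Balancing the strictly negative contribution of the far region against the $O(C/\sigma)$ bad contribution, and multiplying through by $2/\sigma^2$, one concludes $\partial_m F_\sigma < 0$ (say) uniformly on $J$ for $\sigma$ exceeding some $\sigma_c^u$, which one may extend to all of $\mathbb R$ since outside $J$ Assumption \ref{a2} forces $F_\sigma$ to be bounded away from $0$ with a definite sign anyway (this was already noted after the assumptions). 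Finally, monotonicity plus the intermediate-value argument giving at least one root yields exactly one root.

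The main obstacle I anticipate is making the escape-to-infinity estimate genuinely \emph{uniform in $m$ on $J$}: one needs the super-quadratic lower bound $\bar V(x,m) - \bar V(x^*(m),m) \geq \beta (x - x^*(m))^2$ to hold with $\beta>0$ and the bounded ``core'' region to be controlled independently of $m\in J$ — exactly the kind of bound invoked on $R_2$ in the proof of Proposition \ref{multiwell}, so it should transfer. A secondary subtlety is that $-V^{'}$ is only eventually monotone, not globally, so the far-region argument must be set up carefully so that the non-monotone bounded piece is absorbed into the $O(1/\sigma)$ error term rather than into the good term; keeping the split at a fixed radius $R_0$ independent of $\sigma$ handles this. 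I do not expect the topological ``at least one root'' input to be difficult, since for $\sigma$ large $F_\sigma$ inherits definite opposite signs at $\pm\infty$ from Assumption \ref{a2}.
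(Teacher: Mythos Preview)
Your overall plan---show $\partial_m F_\sigma<0$ for all $m$ once $\sigma$ is large, then invoke the signs of $F_\sigma$ at $m\to\pm\infty$---matches the paper. But your execution through the two-variable covariance formula has a concrete gap: Assumption~\ref{a2} asserts only $V'(x)\to\pm\infty$; it does \emph{not} force $-V'$ to be eventually monotone, so your claim that $(a(x)-a(y))(-V'(x)+V'(y))\leq 0$ on $\{|x|,|y|>R_0\}$ is unjustified. (One may perturb a polynomial $V'$ by an oscillatory term whose derivative is unbounded yet which is itself dominated at infinity, and still satisfy Assumptions~\ref{a1}--\ref{v2d} with a sufficiently convex $P'$.) The complement of $\{|x|,|y|>R_0\}$ in $\mathbb R^2$ is moreover cross-shaped, not bounded, so your description of the ``bad'' region also needs repair.

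The paper sidesteps both issues by dropping the normalisation for this proposition. Since zeros of $F_\sigma$ are unaffected, it works with the unnormalised $F$, for which $\partial_m F$ is, up to a positive factor, the single integral $\int_{\mathbb R} a(-V')\,\rho(\sigma,x,m)\,dx$. Now the integrand is strictly negative outside a fixed $[-R,R]$ simply because $a(R)>0>a(-R)$ and $V'\to\pm\infty$---no monotonicity required---and is bounded on $[-R,R]$. The $m$-dependence is handled uniformly over all of $\mathbb R$ (not just a compact $J$) by writing $\rho(x,m)$ as $\rho(x,0)$ times an exponential in $a(x)m$, which on $[-R,R]$ is controlled by its values at $\pm R$; the outer contribution then dominates for large $\sigma$ by monotone convergence. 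Existence of a root follows directly from $F_\sigma(m)\to\mp\infty$ as $m\to\pm\infty$, without appealing to Proposition~\ref{multiwell}.
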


\begin{proof}
	If it could be shown that above some $\sigma_c^u$ $\frac{\partial F}{\partial m}(m)<-\epsilon<0$, then it is clear that $F(m)$ could be bounded above anywhere by a decreasing linear function. Combined with the facts that $F(m)$ exists for all finite $m$, and $\lim_{m\rightarrow\pm \infty}F(m)=\mp\infty$, we could conclude there exists a root, which must be unique.

	It remains to establish the inequality on $\frac{\partial F}{\partial m}(m)$. As an inequality on the sign, we work with the unnormalised stationary measure, 
	\[
		\frac{\partial F}{\partial m}=\int_{\mathbb{R}}a(-V^{'})\rho(\sigma,x,m) dx    
	\]

    By Assumption \ref{a3} and \ref{a2} there exists an $R$ such that $aV^{'}>c$, on $\mathbb{R}\backslash B_0(R)$. 
	To control the integral is split over an inner region, $B_0(R)$ and outer domain, $\mathbb{R}\backslash B_0(R)$. Setting $\mathcal{Z}=\exp\big(-\frac{2}{\sigma^2}\bar{V}(x^*(0))\big)$ and repeatedly using $a$ is increasing with $a(-R)<0<a(R)$:

    \begin{align}
        \int_{-R}^{R}a(-V^{'})\frac{\mathcal{Z}}{\mathcal{Z}}\rho(\sigma,x,m) dx&<C\mathcal{Z}\int_{-R}^{R}\frac{\rho(\sigma,x,0)}{\mathcal{Z}}\exp(\frac{2}{\sigma^2}am)dx\\
        \label{luub}&<C_1\mathcal{Z}\big(\mathbb{I}_{m\geq0}\exp(\frac{2a(R)}{\sigma^2}m)+\mathbb{I}_{m<0}\exp(\frac{2a(-R)}{\sigma^2}m)\big)
    \end{align}
    with $C=\sup_{[-R,R]}\frac{|a(-V^{'})|}{k^2}$ and $C_1=2RC$. Similarly, 
    \begin{align}
        \mathcal{Z}\int_{\mathbb{R}\backslash B_0(R)}aV^{'}&\frac{\rho_0(\sigma,x,m)}{\mathcal{Z}} dx>C_2\mathcal{Z}\int_{R}^{\infty}\frac{\rho(\sigma,x,0)}{\mathcal{Z}}\exp(\frac{2}{\sigma^2}am)dx\\
        \label{uuub}&>2C_2\mathcal{Z} I_\sigma\Big(\mathbb{I}_{m\geq0}\exp(\frac{2a(R)}{\sigma^2}m)+\mathbb{I}_{m<0}\exp(\frac{2a(-R)}{\sigma^2}m)\Big)
    \end{align}
    where $C_2=c\cdot\mathrm{min}(-a(-R),a(R))$ and $I_\sigma=2\int_R^\infty\frac{\rho(\sigma,x,0)}{\mathcal{Z}} dx$

    With the monotone convergence theorem, $I_\sigma$ is an increasing unbounded function. Therefore it is possible to find $\sigma_c^{u}$ such that, for $\sigma>\sigma_c^{u}$, $I_\sigma>C_1+\epsilon$. Then, subtracting (\ref{uuub}) from (\ref{luub}), we have $\frac{\partial F}{\partial m}(m)<-\epsilon<0$. The claim is therefore proven and the desired result established.
	
\end{proof}
\begin{remark}
While this argument can be extended to multimodal $\rho$, we advance a different argument for the analogous Corollary \ref{unboundedbelow} which is idiomatic to symmetric potentials.
\end{remark}

As $F_\sigma$ is bounded away from zero outside some finite interval by Assumption \ref{a2}, we can count roots on all $\mathbb{R}$. We combine this with a restatement of Proposition \ref{multiwell} and \ref{upb}, with the same assumptions, in terms of stationary measures of MV-SDE(\ref{proto}). 
\begin{theorem}{Stationary Measures of MV-SDE (\ref{proto})}
\label{statm}
Suppose $V^{'}$ has $N<\infty$ zeros, $\{x_i\}_i^N$, all of which are simple. Then there exists critical transition thresholds $\sigma_c^l,\,\sigma_c^u$ such that $\sigma>\sigma_c^u$ MV-SDE (\ref{proto}) has one stationary measure while $\sigma<\sigma_c^l$, it possesses exactly $N$ stationary measures.
\end{theorem}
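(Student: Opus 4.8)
The plan is to assemble Theorem~\ref{statm} directly from the two Propositions already proved, by translating statements about the roots of $F_\sigma$ into statements about admissible stationary measures. The connecting fact is the Proposition establishing the equivalence of the self-consistency equation $G_\sigma(m)=0$ with the first-moment equation $F_{(\sigma,\theta)}(m)=0$: each root $m$ of $F_\sigma$ yields via \eqref{statmeas} an admissible stationary measure $\rho(\sigma,\cdot,m)$, and distinct roots give distinct measures, since $m\mapsto \mathbb{E}_{\rho(\sigma,\cdot,m)}[P'(x)]$ (equivalently the mode location $x^{*-1}(m)$) is injective under the running assumptions; hence counting stationary measures is the same as counting roots of $F_\sigma$ on all of $\mathbb{R}$.

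First I would justify that it suffices to count roots on a compact set. By Assumption~\ref{a2}, $V'\to\pm\infty$ as $x\to\pm\infty$, and combined with Assumption~\ref{a3} (so $a'=1/k^2$ is bounded, whence $-aV'/k^2$ is eventually bounded away from zero with a fixed sign on each side) one shows $F_\sigma(m)$ is bounded away from $0$ for $|m|$ large, uniformly over the relevant range; in fact $\lim_{m\to\pm\infty}F_\sigma(m)=\mp\infty$ as already noted in the proof of Proposition~\ref{upb}. Therefore all roots of $F_\sigma$ lie in some fixed compact interval, and we may choose a connected compact $J$ whose interior contains all $N$ simple zeros $\{x_i\}$ of $V'$, large enough that $x^{*-1}(J)$ captures every root of $F_\sigma$.

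Next I would invoke the two Propositions on this $J$. Proposition~\ref{upb} gives a threshold $\sigma_c^u$ above which $F_\sigma$ has exactly one root (existence from the sign change at $\pm\infty$, uniqueness from $\partial_m F_\sigma<-\epsilon<0$), hence MV-SDE~\eqref{proto} has a unique stationary measure for $\sigma>\sigma_c^u$. Proposition~\ref{multiwell} (using Assumptions~\ref{a1}--\ref{v2d}, or Corollary~\ref{at2} to relax Assumption~\ref{v2d}) provides a threshold $\sigma_c$ below which $F_\sigma|_{x^{*-1}(J)}$ has precisely $N$ zeros; setting $\sigma_c^l$ to be this $\sigma_c$ (shrunk if necessary so that the compact interval from the previous paragraph still contains all roots), we get exactly $N$ stationary measures for $\sigma<\sigma_c^l$. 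Finally I would remark that $\sigma_c^l\le\sigma_c^u$ necessarily (since $N\ge 1$ always, and when $N>1$ the root counts $N$ and $1$ differ, forcing the thresholds to be distinct or at least ordered), and that the existence of at least one critical transition in between follows since the number of roots changes from $N$ to $1$; but strictly the statement only asserts the existence of the two thresholds with the stated root counts, which is immediate from the above.

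The only genuine subtlety — and the step I would be most careful about — is the bookkeeping that lets us count roots on all of $\mathbb{R}$ rather than on the a priori chosen $J$: one must check that the compact interval controlling the tails of $F_\sigma$ can be chosen uniformly in $\sigma$ over $(0,\sigma_c^l)$ and over $(\sigma_c^u,\infty)$ respectively, so that no roots escape to infinity or appear outside $x^{*-1}(J)$ as $\sigma$ varies. This is where Assumptions~\ref{a3} and~\ref{a2} do the real work, exactly as in the tail estimates of Propositions~\ref{multiwell} and~\ref{upb}; once that uniformity is in hand, the theorem is just a restatement of those two results together with the self-consistency/first-moment equivalence.
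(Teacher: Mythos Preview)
Your proposal is correct and follows essentially the same route as the paper: invoke Proposition~\ref{upb} for the upper threshold, choose a compact $J$ containing all simple zeros of $V'$, apply Proposition~\ref{multiwell} on $J$ for the lower threshold, and use the bijection between roots of $F_\sigma$ and stationary measures. The paper handles your ``genuine subtlety'' about tails more directly than you suggest---it simply picks $J$ so that $|V'/k^2|>\delta$ on $J^c$ and then uses the uniform convergence $F_\sigma\to -\tfrac{V'}{k^2}\circ x^*$ from Proposition~\ref{multiwell} to conclude $F_\sigma$ is bounded away from zero outside $x^{*-1}(J)$ for small $\sigma$---but this is a minor tactical difference, not a different argument.
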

\begin{proof}
	Proposition \ref{upb} can be applied verbatim. 
	By Assumption \ref{a2}, there exists a finite interval $J$ such that $\{x_i\}^N_i \in J^{o}$ and $|\frac{V^{'}}{k^2}|>\delta, \, x\in J^c$.
	Applying Proposition \ref{multiwell} to $J$, noting $F_\sigma$ must be bounded away from 0 for sufficiently small $\sigma$, and the bijection between zeros and stationary measures, we can conclude the result. 
\end{proof}

\begin{remark}
	We are now in a position to compare MV-SDE (\ref{proto}) with $\theta>0$ to $\theta=0$, with assumptions 1-8. Below $\sigma_c^l$, every stationary measure is a unimodal distribution whose mean (and mode) can be made arbitrarily close to any extremal point (maxima and minima) of $V$. When $\theta=0$, there is one stationary measure whose modes coincide with the minima of $V$. Above $\sigma_c^u$, MV-SDE (\ref{proto}) becomes ergodic but is still unimodal.
\end{remark}

\begin{remark}[More Examples]
	Section 4 of \cite{gomes} introduces simple polynomial uni-stable ($V^{'}=x$) and bistable potential perturbed by separable and non-separable fluctuations, with quadratic interaction. 
	
	\cite{gomes} approaches the problem by defining the potential $V_0=\int_x\frac{V^{'}}{k^2}$ to which a perturbation is added, rather than working with its derivative as we have favoured here. In the case of separable perturbation, $\delta\cos(\frac{x}{\epsilon})$, Proposition \ref{multiwell} easily applies with $\theta>\frac{\delta}{\epsilon^2}$.

	The non-separable perturbation $\delta\mathbb{I}_{[-a,a]}x^2\cos(\frac{x}{\epsilon})$ requires a little more work as it is non-differentiable. Reading the remarks in \cite{gomes}, the indicator function is producted into the definition to control the fluctuations as $|x|\rightarrow\infty$ to later apply the homogenisation theorem, so $a$ should be read as `large'. In light of this, it is reasonable to only consider $a$ very close to a zero of $\cos(\frac{x}{\epsilon})$, and interpolate between the two with quadratic interpolants. This is in the unbounded region, so again Proposition \ref{multiwell} can be applied for suitably large $\theta$
\end{remark}

\section{Symmetric Effective Potentials}
\label{sec:secsym}
This section is concerned with SDE (\ref{proto}) with a symmetric effective potential (antisymmetric $P^{'}$ $-V^{'}$, symmetric $k$). Whilst a few results could be deduced from Section \ref{sec:mw1}, they would have narrower applicability and most, such as those on the Critical Transition and its dependence on $\theta$, cannot.

This is because, in return for this symmetry, some of the most onerous restrictions of Section \ref{sec:mw1} can be relaxed or even lifted, such as the prohibition on double roots in $-V^{'}$ and non-increasing $\bar V$ can be lifted. Amongst other interesting possibilities, this allows repulsive-attractive $P^{'}$.

\subsection{Bistable Potential}
\label{sec:bistable}
This section is concerned with SDE (\ref{proto}) with a antisymmetric $P^{'}$ and $-V^{'}$, which is bistable, i.e possessing three roots at $-x^*,0,x^*$. The polynomial bistable potential is well studied, \cite{dawson,desi,kome,shiino}.

The approach here is still rooted in a study of the self-consistency equation $F_\sigma$, or more precisely the equivalent first moment equation. After establishing some key propositions about the number of roots of $F_\sigma$, critical transition points are studied in detail.

The most basic assumptions are
\begin{enumerate}
	\item $-V^{'}$, $P^{'}$ and $k^{'}$ are antisymmetric and $C(\mathbb{R})$.
	\item $-V^{'}$ has three roots at \(\{0,\pm |x^*|\}\), and $-V^{'}(x)<0\, x>x^*$ 
	\item \(\lim\limits_{|x|\rightarrow\infty}\frac {\bar V(x,0)}{x^2}>0\)
	\item $|V^{'}|<K(1+x^{2N})$
\end{enumerate}

Assumption 1 and 3 ensures enough regularity for the integration by parts in section \ref{sec:setup} and that the boundary terms can be discarded. The polynomial bounds in the fourth ensure integrability and the second simply outlines a bistable potential. Further assumptions (5-8) will be introduced when the critical transition point is studied.

In much of this section, $\sigma$ dependence of $\rho$ is supressed. Parity restrictions (Assumption 1) are manifested in $F_\sigma$ according to the following lemma

\begin{lemma}\label{anti}
F(m) is antisymmetric
\end{lemma}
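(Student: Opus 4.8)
The plan is to use the definition of $F_\sigma$ via the first moment equation and the antisymmetry assumptions on $-V'$, $P'$ and $k'$ to show that reflecting $m \mapsto -m$ in the integral is equivalent to reflecting the dummy variable $x \mapsto -x$, which in turn flips the overall sign. Concretely, I would work with the equivalent form
\[
F_{(\sigma,\theta)}(m) = \frac{1}{\theta}\int_{\mathbb{R}} -\frac{V'(x)}{k^2(x)}\,\rho_0(\sigma,x,m)\,dx,
\]
and establish the key intermediate fact that the Gibbs kernel is reflection-covariant in the sense that $\rho_0(\sigma,-x,-m) = \rho_0(\sigma,x,m)$.

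To prove that covariance, recall $\rho_0(\sigma,x,m) = \exp\big(-\tfrac{2}{\sigma^2}\bar V_\theta(x,m)\big)$ with $\bar V_\theta(x,m) = \int^x \frac{V'(t) + \theta(P'(t)-m)}{k^2(t)}\,dt$. First I would note that antisymmetry of $V'$ and of $P'$ together with symmetry of $k^2$ (which follows from $k'$ being antisymmetric, hence $k$ even up to the irrelevant additive constant, so $k^2$ even) makes each of $V'/k^2$ and $P'/k^2$ an odd function of $t$, while $1/k^2$ (i.e. $a'$) is even; therefore under $t \mapsto -t$ the integrand $\frac{V'(t)+\theta(P'(t)-m)}{k^2(t)}$ transforms into $-\frac{V'(t)+\theta(P'(t)+m)}{k^2(t)}$. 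Changing variables $t \mapsto -t$ in the integral defining $\bar V_\theta(-x,-m)$ then yields $\bar V_\theta(-x,-m) = \bar V_\theta(x,m)$ up to the arbitrary constant of integration, which cancels in the normalised/unnormalised density since it only rescales $\rho_0$ by a constant independent of $x$ (and here we work with the unnormalised measure, absorbing it). Hence $\rho_0(\sigma,-x,-m) = \rho_0(\sigma,x,m)$.

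With the covariance in hand, the conclusion is immediate: substituting $x \mapsto -x$ in the integral for $F_{(\sigma,\theta)}(-m)$ gives
\[
F_{(\sigma,\theta)}(-m) = \frac{1}{\theta}\int_{\mathbb{R}} -\frac{V'(-x)}{k^2(-x)}\,\rho_0(\sigma,-x,-m)\,dx = \frac{1}{\theta}\int_{\mathbb{R}} \frac{V'(x)}{k^2(x)}\,\rho_0(\sigma,x,m)\,dx = -F_{(\sigma,\theta)}(m),
\]
using that $V'$ is odd, $k^2$ even, and the covariance just established, plus invariance of $\mathbb{R}$ under reflection.

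I do not anticipate a serious obstacle here; the only point requiring mild care is the constant of integration in the indefinite integral defining $\bar V_\theta$, which must be handled consistently (it drops out because it multiplies $\rho_0$ by an $x$-independent constant, and we are counting roots of $F_\sigma$, so any positive prefactor is immaterial — indeed $F_\sigma$ itself is defined with the unnormalised $\rho_0$ in this section). A secondary bookkeeping point is confirming that $k'$ antisymmetric forces $k$ even up to a constant and hence $k^2$ even (the additive constant in $k$ would break evenness, so strictly one reads Assumption 1 as $k$ itself being even, consistent with $k > \epsilon$); I would state this reading explicitly rather than belabour it.
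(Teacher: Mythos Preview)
Your proposal is correct and follows essentially the same route as the paper: both arguments hinge on the reflection covariance $\rho_0(\sigma,-x,-m)=\rho_0(\sigma,x,m)$ induced by Assumption~1, followed by the substitution $x\mapsto -x$ in the integral defining $F_\sigma$. Your write-up is simply more explicit about verifying the covariance (parity of $k^2$, handling of the integration constant) than the paper, which asserts it in one line.
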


\begin{proof}
	Assumption 1 implies $\rho(x,m)=\rho(-x,-m)$, where $\rho(x,m)=\exp(-\frac{2}{\sigma^2}(\bar V(x,m)))$. Then
\[
	\begin{split}
	F(-m)=\int_{-\infty}^{\infty}-V^{'}(x)\rho(-m,x)dx\stackrel{y=-x}{=}-\int^{-\infty}_{\infty}(-V^{'}(y))\rho(-m,-y)dy=
	\\ \int_{-\infty}^{\infty}V^{'}(y)\rho(m,y)dy=-F(m)
\end{split}
\]
holding regardless of $\sigma$ or $\theta$
\end{proof}

The following proposition characterises the roots of $F_\sigma$. For constant diffusion, compare with (3.21) of \cite{shiino}, Theorem 3.31 of \cite{dawson} and, particularly, Theorem 2.1 of \cite{tug} 

\begin{proposition}[Properties of $ F(m)$ ]$ $
	\label{psd}
	\begin{enumerate}[label=(\roman*)]
		\item F has a root at 0.
		\item There is at most one strictly positive (negative) root of $F_\sigma$.
		\item These additional roots exist iff $\left.\frac{\partial F}{\partial m}\right\vert_{m=0}>0$
	\end{enumerate}
\end{proposition}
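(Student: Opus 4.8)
The plan is to rewrite $F$ as an exponentially tilted integral in the moment variable $m$ and run a variation‑diminishing (total‑positivity) argument on it. Since $\bar V_\theta(x,m)=\bar V_\theta(x,0)-\theta m\,a(x)$ with $a(x)=\int_0^x k^{-2}$, one has $F(m)=\tfrac1\theta\int_{\mathbb R}\tfrac{-V'(x)}{k^2(x)}\,\rho_0(\sigma,x,0)\,\exp\!\big(\tfrac{2\theta m}{\sigma^2}a(x)\big)\,dx$. By Assumption~1 the weight $\psi(x):=\tfrac{-V'(x)}{k^2(x)}\rho_0(\sigma,x,0)$ is odd and $a$ is odd, so splitting at $0$,
\[
F(m)=\frac{2}{\theta}\int_0^\infty \psi(x)\,\sinh\!\Big(\tfrac{2\theta m}{\sigma^2}a(x)\Big)\,dx ,
\]
which gives $F(0)=0$, i.e.\ (i). On $(0,\infty)$ the function $a$ is strictly positive and increasing, while Assumption~2 (a genuine double well, so $-V'>0$ on $(0,x^*)$) makes $\psi$ positive on $(0,x^*)$, zero at $x^*$, negative on $(x^*,\infty)$ — a single change of sign. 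The super‑Gaussian decay of $\rho_0(\sigma,\cdot,0)$ from Assumption~3, together with the at most linear growth of $a$ (as $k$ is bounded below) and the polynomial bound of Assumption~4, dominates the hyperbolic factors, so we may differentiate under the integral:
\[
\frac{\partial F}{\partial m}(m)=\frac{4}{\sigma^2}\int_0^\infty \psi(x)\,a(x)\,\cosh\!\Big(\tfrac{2\theta m}{\sigma^2}a(x)\Big)\,dx ,
\]
and $\psi\,a$ still changes sign exactly once, from $+$ to $-$ at $x^*$.

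The heart of the proof is the monotonicity lemma: if $\partial_m F(m_0)\le 0$ for some $m_0\ge0$, then $\partial_m F(m)<0$ for all $m>m_0$. Fix $m>m_0$ and write $\cosh(\tfrac{2\theta m}{\sigma^2}a(x))=R(a(x))\,\cosh(\tfrac{2\theta m_0}{\sigma^2}a(x))$, where $R(u)=\cosh(\tfrac{2\theta m}{\sigma^2}u)/\cosh(\tfrac{2\theta m_0}{\sigma^2}u)$ is strictly increasing in $u>0$ — this reduces to the elementary fact that $s\mapsto s\tanh(cs)$ is strictly increasing on $(0,\infty)$ for every $c>0$. With $R^*:=R(a(x^*))$ one has $R(a(x))<R^*$ for $x\in(0,x^*)$ and $R(a(x))>R^*$ for $x>x^*$; matching these against the sign of $\psi\,a$ shows that the integrand of $\partial_mF(m)$ is pointwise strictly below $R^*$ times that of $\partial_mF(m_0)$ (equality only at $x^*$), and integrating gives $\partial_mF(m)<R^*\,\partial_mF(m_0)\le0$.

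It follows that $\partial_mF$ is $>0$ on an initial interval $[0,\hat m)$ and $<0$ on $(\hat m,\infty)$ for some $\hat m\in[0,\infty]$; hence $F$ is strictly increasing on $[0,\hat m]$ and strictly decreasing on $[\hat m,\infty)$ — a single hump anchored at $F(0)=0$ — so it has at most one zero on $(0,\infty)$, and by the antisymmetry of Lemma~\ref{anti} at most one on $(-\infty,0)$: this is~(ii). For~(iii): if $\partial_mF(0)\le0$ then $\hat m=0$, so $F<F(0)=0$ on $(0,\infty)$ (and $>0$ on $(-\infty,0)$ by antisymmetry) and $0$ is the only root; conversely, if $\partial_mF(0)>0$ then $\hat m>0$ and $F>0$ just to the right of $0$, so once we know $F$ is eventually negative, the intermediate value theorem together with~(ii) produces exactly one positive root and — by antisymmetry — its negative mirror.

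The one genuinely quantitative point, and the step I expect to be the main obstacle, is this eventual negativity of $F$. As $m\to\infty$ the minimiser of $x\mapsto\bar V_\theta(x,0)-\theta m a(x)$ is driven to $+\infty$ (Assumption~3 forces $\bar V_\theta(x,0)\gtrsim x^2$, while $-\theta m a(x)\to-\infty$ pointwise on $\{x>0\}$), so the mass of $\rho_0(\sigma,\cdot,m)$ escapes, relative to any fixed bounded window, into $\{x>x^*\}$ where $-V'<0$. Quantitatively: fixing $x^*<b_1<b_2$, monotonicity of $a$ bounds the contribution of $[b_1,b_2]$ to $F(m)$ above by $-C_0\exp(\tfrac{2\theta m}{\sigma^2}a(b_1))$ with $C_0>0$ (using $-V'\le-\eta<0$ there), whereas Assumption~4 gives $\int_{-\infty}^{x^*}|{-V'}|k^{-2}\rho_0(\sigma,x,m)\,dx\le D\exp(\tfrac{2\theta m}{\sigma^2}a(x^*))$; since $a(b_1)>a(x^*)$ the negative term wins for $m$ large, so $F(m)<0$. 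Everything else reduces to the $\cosh/\tanh$ convexity computation above.
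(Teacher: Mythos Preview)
Your proof is correct, and it takes a genuinely different route from the paper's. The paper expands $\exp\bigl(\tfrac{2\theta m}{\sigma^2}a(x)\bigr)$ as a power series in $m$, obtaining $F(m)=\sum_n \tfrac{(2m/\sigma^2)^n}{n!}I(n)$ with $I(n)=\int(-V')a^n\rho(x,0)\,dx$; it then shows that the rescaled odd coefficients $\tilde I(2n-1)=a(x^*)^{-(2n-1)}I(2n-1)$ are strictly decreasing in $n$ (because $(a/a(x^*))^{2n-1}$ is ${<}1$ on $(0,x^*)$ and ${>}1$ on $(x^*,\infty)$, matching the single sign change of $-V'$) and eventually negative, and finishes by factoring the series at the sign-change index $n_c$ into a strictly decreasing minus a strictly increasing function. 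Your argument replaces this discrete monotonicity of coefficients by its continuous analogue: the ratio $u\mapsto\cosh(\alpha u)/\cosh(\beta u)$ is strictly increasing for $\alpha>\beta\ge0$, which together with the single sign change of $\psi a$ at $x^*$ forces $\partial_m F$ to be strictly decreasing once it is nonpositive. Both arguments ultimately exploit the total positivity of the exponential kernel against a weight with one sign change; yours is the cleaner variation-diminishing formulation and sidesteps the series manipulation, while the paper's coefficient framework is what gets reused verbatim in the multi-well section (the quantities $\tilde I(2n-1)$, $a_n$, $b_n$, $c_n$ of Propositions~\ref{ssfn}--\ref{ordered} are precisely these series coefficients). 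Your tail estimate for eventual negativity of $F$ is also more direct than the paper's, which deduces it from the coefficient sign pattern via the factorisation \eqref{splits}.
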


\begin{proof}
	Part (\textit{i}) follows immediately from the antisymmetry of $F_\sigma$, Lemma \ref{anti}.

	For Parts (\textit{ii}) \& (\textit{iii}), we derive the series representation of $F_\sigma$ by substituting $\exp(\frac{2}{\sigma^2}mx)$ with its series representation, 
	$$F(m)=\int-V^{'}(x)\rho(x,0)\sum_{n}\frac{(\frac{2}{\sigma^2}ma)^n}{n!}dx$$ 

	Given the antisymmetry of $F_\sigma$, we focus on $m\in\mathbb{R}^+$
	The strategy for the proof is to show the coefficients of the series are at most change sign once, from positive to negative. It is then straightforward to demonstrate that $F(m)$ must also have at most one root in $\mathbb{R}^{+}$
	
	With the polynomial growth conditions 3 \& 4, $F(m)$ can be bounded above as moments of a Gaussian distribution. Exchange of the summation and integral

	\begin{equation}\label{scps}
		F(m)=\sum_n\frac{(\frac{2}{\sigma^2}m)^n}{n!}\int-V^{'}a^n\rho(x,0)dx:=\sum_n\frac{(\frac{2}{\sigma^2}m)^n}{n!}I(n)
	\end{equation}
	can then be justified with an application of the Tonelli-Fubini Theorem.

	From the antisymmetry of the integrand, $I(2n)=0$. Reciprocally, by the symmetry of the integrand of $I(2n-1)$ 
	\begin{equation}
		I(2n-1)=2\int_{0}^{\infty}-V^{'}a^{2n-1}\rho(x,0)dx
	\end{equation}

	By assumption on $k$, $a$ must be strictly increasing. Consequently, 
	\begin{align}
		\begin{split}
	\big(\frac{a}{a(x^*)}\big)^{2n-1}<1,\,x<x^*\\
	\big(\frac{a}{a(x^*)}\big)^{2n-1}\geq 1,\,x\geq x^*
		\end{split}
	\end{align}
	
	With these facts, and introducing the rescaled coefficient
	\begin{equation}
	\tilde I(2n-1,[0,\infty))= 2\int_{0}^{\infty}-V^{'}\Big(\frac{a}{a(x^*)}\Big)^{2n-1}\rho(x,0)dx
	\end{equation}
	with 
	\[
		\frac{a}{a(x^*)}=\tilde a
	\]
	it can be seen both
	\begin{align}\label{ineq}
	\begin{split}
	0<\,&\tilde I(2n-3,[0,x^*])< \tilde I(2n-1,[0,x^*])\\
	&\tilde I(2n-3,[x^*,\infty))< \tilde I(2n-1,[x^*,\infty))<0
	\end{split}
	\end{align}
	by the monotonicity of the integral. Adding these two monotonically decreasing inequalities, we see $\tilde I(2n-1,[0,\infty))$ is also strictly decreasing, 
	\begin{equation}\label{monot}
		\tilde I(2n-1)<\tilde I(2n-3)
	\end{equation}
	
	Applying the Dominated convergence theorem,  $\lim\limits_{n\rightarrow\infty}\tilde I(2n-1,[0,x^*])\rightarrow 0$. Therefore there must exist a threshold $n_{t}$ such that $|\tilde I(2n_{t}-1,[0,x^*])|<|\tilde I(1,[x^*,\infty))|$ so, combined with monotonicity (\ref{monot}), we conclude $\tilde I(2n-1,[0,\infty))$ must eventually become negative.
	

	As a positive multiple of $\tilde I(2n-1, [0,\infty))$, $I(2n-1)$, while not necessarily monotonic, inherits the crucial property that there exists a threshold $n_c<\infty$ such that $I(2n-1)\geq 0$ iff $n\leq n_c$. 
	
	If all the $I(2n-1)$ are negative we set $n_c=0$, and clearly $F(m)<0, m>0$

	For $n_c\neq 0$, splitting the series by sign at $n_c$ and factorising
	\begin{equation}\label{splits}
		F(m)=m^{2n_c-1}\Big(\sum_1^{n_c}
		\frac{(\frac{2}{\sigma^2})^{2n-1}}{2n-1!}I_{2n-1}\frac{1}{m^{2(n_c-n)}} + \sum_{n_c+1}^\infty \frac{(\frac{2}{\sigma^2})^{2n-1}}{2n-1!} I_{2n-1}m^{2(n-n_c)}\Big)
	\end{equation}
	Inside the parenthesis is the difference of a strictly decreasing and increasing functions, where the limit as $m\downarrow 0$ is $\infty$ and $m\uparrow\infty$ is $-\infty$. We conclude $F_\sigma$ has at most one root in $\mathbb{R}^+$, where the sign must change from positive to negative.

	As has just been outlined, a root can only occur if $n_c\neq 0$. By the monotonicity of $\tilde I (2n-1)$, $n_c\neq 0$ iff $I(1)>0$. Then $I(1)=\int -V^{'}a\rho=\frac{\partial F}{\partial m}$ which is part (\textit{iii})
\end{proof}

The rest of this section is devoted to studying critical transitions in $\sigma$: whether and when the one solution regime turns to the three, and/or vice versa. 

This can be restated in terms of the number of roots of $F_\sigma(m)$, section \ref{sec:setup} and in the bistable case, this reduces by Proposition \ref{psd} (\textit{iii}) to studying the sign of $\left.\frac{\partial F}{\partial m}\right\vert_{m=0}(\sigma)$, i.e 

\begin{lemma}
	The critical transition(s) $\sigma_c$ corresponds to roots of $\left.\frac{\partial F}{\partial m}\right\vert_{m=0}(\sigma)$.
\end{lemma}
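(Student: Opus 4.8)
The plan is to reduce the statement to a single continuity fact. Write $g(\sigma):=\left.\frac{\partial F}{\partial m}\right\vert_{m=0}(\sigma)$. By Proposition \ref{psd} together with the antisymmetry of $F$ (Lemma \ref{anti}), for every fixed $\sigma>0$ the zero set of $F_\sigma$ is exactly $\{0\}$ when $g(\sigma)\le 0$ and exactly $\{0,\pm m^{\ast}(\sigma)\}$ for some $m^{\ast}(\sigma)>0$ when $g(\sigma)>0$; in particular $F_\sigma$ has precisely one root when $g(\sigma)\le 0$ and precisely three roots when $g(\sigma)>0$, and never any other number. Consequently, once we know that $\sigma\mapsto g(\sigma)$ is continuous on $(0,\infty)$, the argument closes: if $g(\sigma_0)\neq 0$ then $g$ retains its sign on a neighbourhood of $\sigma_0$, so the root count of $F_\sigma$ is locally constant there and $\sigma_0$ is not a critical transition; contrapositively every critical transition is a zero of $g$. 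Conversely, at a zero $\sigma_0$ of $g$ near which $g$ takes both signs, the root count jumps between $1$ and $3$ and $\sigma_0$ is a critical transition. This is the asserted correspondence, so everything comes down to continuity of $g$.

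To establish that, note that by the computation in the proof of Proposition \ref{psd} the function $g(\sigma)$ equals, up to a strictly positive factor which is continuous (indeed smooth) in $\sigma$ on $(0,\infty)$, the integral
\[
J(\sigma):=\int_{\mathbb{R}}\big(-V^{'}(x)\big)\,a(x)\,\exp\!\Big(-\frac{2}{\sigma^{2}}\,\bar V(x,0)\Big)\,dx,
\]
so it is enough to show $J$ is continuous on $(0,\infty)$, which I would do by dominated convergence. By Assumption 3 the map $x\mapsto\bar V(x,0)$ is continuous and eventually bounded below by a positive multiple of $x^{2}$, hence bounded below on all of $\mathbb{R}$; fix a constant $M\ge 0$ with $\bar V(x,0)+M\ge 0$ for every $x$ and $\bar V(x,0)+M\ge c\,x^{2}$ for $|x|$ large, some $c>0$. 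Then, for $\sigma$ in a compact interval $[\sigma_1,\sigma_2]\subset(0,\infty)$, writing $\exp(-\tfrac{2}{\sigma^{2}}\bar V)=e^{2M/\sigma^{2}}\exp(-\tfrac{2}{\sigma^{2}}(\bar V+M))$ and using $\bar V+M\ge 0$ with $\sigma_1\le\sigma\le\sigma_2$,
\[
\exp\!\Big(-\tfrac{2}{\sigma^{2}}\,\bar V(x,0)\Big)\ \le\ e^{2M/\sigma_1^{2}}\exp\!\Big(-\tfrac{2}{\sigma_2^{2}}\big(\bar V(x,0)+M\big)\Big).
\]
Combining this with the polynomial bound $|V^{'}|<K(1+x^{2N})$ (Assumption 4), the boundedness of $1/k^{2}$ (whence $a$ grows at most linearly), the integrand of $J(\sigma)$ is dominated, uniformly over $\sigma\in[\sigma_1,\sigma_2]$, by a fixed function of the shape (polynomial)$\cdot\exp(-\tfrac{2}{\sigma_2^{2}}(\bar V(x,0)+M))$, which is integrable. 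Dominated convergence then gives continuity of $J$, hence of $g$, on $[\sigma_1,\sigma_2]$, and therefore on all of $(0,\infty)$ since the interval was arbitrary. (The same majorant lets one differentiate under the integral, so $g$ is in fact smooth — even real-analytic — in $\sigma$, but only continuity is needed.)

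The one genuinely delicate point is this domination estimate: $\sigma$ sits inside the exponent, so one must produce a single integrable majorant valid across a whole compact range of $\sigma$, and the device that makes this work is to first add the constant $M$ so that $\bar V(x,0)+M$ is nonnegative before comparing the exponentials at different values of $\sigma$ — without that step the obvious bound blows up as $\sigma\downarrow 0$. Everything else is immediate from Proposition \ref{psd}, Lemma \ref{anti}, and the definition of a critical transition as a value of $\sigma$ at which the number of roots of $F_\sigma$ changes.
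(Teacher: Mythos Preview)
Your argument is correct and follows the same reasoning the paper intends: the paper offers no separate proof of this lemma, presenting it instead as an immediate restatement of Proposition~\ref{psd}(iii) (the sentence just before the lemma says the question ``reduces by Proposition~\ref{psd}(iii) to studying the sign of $\left.\frac{\partial F}{\partial m}\right\vert_{m=0}(\sigma)$''). You make that deduction explicit and, going a step further than the paper, verify the continuity of $g(\sigma)$ via a dominated-convergence bound---a detail the paper leaves implicit but which is indeed needed for the contrapositive step to work.
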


The forthcoming Proposition entirely characterises the possible number of roots and their existence.  For this the following assumptions are needed, and additionally for all remaining results of this section.

\begin{enumerate}
\item[(5)]\(\sup\limits_{0\leq x\leq x^*}\bar V(x,0)=\bar V(x^*,0)>0\)
\item[(6)]\(\inf\limits_{x\geq x^*}\bar V(x,0)=\bar V(x^*,0)\)
\end{enumerate}

\begin{remark}
	These assumptions generalise $\bar V(x,0)$ being strictly increasing
\end{remark}

\begin{proposition}\label{uniroot}(The Roots of \(\left.\frac{\partial F}{\partial m}\right\vert_{m=0}(\sigma)\))

	\(\left.\frac{\partial F}{\partial m}\right\vert_{m=0}(\sigma)\) can have at most one root. The exact number of roots can be characterised as follows:
	
	\begin{itemize}
	\item[$\centerdot$] There are no roots iff \(\lim\limits_{\sigma\downarrow 0}\left.\frac{\partial F}{\partial m}\right\vert_{m=0}(\sigma)<0\).
	
	\item[$\centerdot$] There is exactly one root iff \(\lim\limits_{\sigma\downarrow 0}\left.\frac{\partial F}{\partial m}\right\vert_{m=0}(\sigma)>0\)
	\end{itemize}
\end{proposition}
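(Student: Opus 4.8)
The plan is to study the function $\sigma\mapsto\left.\frac{\partial F}{\partial m}\right\vert_{m=0}(\sigma)=I(1)(\sigma)=\int -V'\,a\,\rho(\sigma,x,0)\,dx$ directly, exploiting the antisymmetry already established in Lemma \ref{anti}. Since the integrand is symmetric in $x$, this equals $2\int_0^\infty -V'\,a\,\rho_0(\sigma,x,0)\,dx$, and after dividing by the positive normalising constant $\mathcal Z$ one may work with $\rho_0/\mathcal Z$ where $\mathcal Z=\exp(-\tfrac{2}{\sigma^2}\bar V(x^*,0))$ is chosen to anchor the exponent at the value $\bar V(x^*,0)$. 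This choice is exactly what Assumptions (5) and (6) are built for: on $[0,x^*]$ the quantity $\bar V(x,0)-\bar V(x^*,0)$ is $\le 0$ (with equality only at $x^*$), while on $[x^*,\infty)$ it is $\ge 0$. Thus $\rho_0/\mathcal Z \to \mathbf 1_{[0,x^*]}$ pointwise as $\sigma\downarrow 0$ on the region where $-V'a>0$, and is dominated by an integrable (Gaussian, by Assumptions 3--4) bound elsewhere.

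First I would split the domain $[0,\infty)$ at $x^*$. On $[0,x^*]$, $-V'\ge 0$ and $a\ge 0$, so the integrand is nonnegative, and by dominated convergence the contribution converges to $2\int_0^{x^*}(-V')a\,dx>0$ as $\sigma\downarrow0$ — this is a strictly positive, $\sigma$-independent constant. On $[x^*,\infty)$, $-V'\le 0$ (Assumption 2) and $a>0$, so that contribution is $\le 0$, and by the same dominated-convergence argument using the super-quadratic lower bound on $\bar V(x,0)-\bar V(x^*,0)$ it tends to $0$ as $\sigma\downarrow0$. Hence the limit $\lim_{\sigma\downarrow0}\left.\frac{\partial F}{\partial m}\right\vert_{m=0}(\sigma)$ exists (possibly $+\infty$ is excluded by the bound; it is a finite number $L$), and this is the quantity appearing in the statement.

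The heart of the argument is monotonicity in $\sigma$: I would show that $\sigma\mapsto\left.\frac{\partial F}{\partial m}\right\vert_{m=0}(\sigma)$ is strictly decreasing, or at least that it changes sign at most once from positive to negative. The cleanest route reuses the series machinery from the proof of Proposition \ref{psd}. Recall $F(m)=\sum_n \frac{(2/\sigma^2)^n}{n!}m^n I(n)$ with $I(2n)=0$ and, crucially, the \emph{rescaled} coefficients $\tilde I(2n-1)=2\int_0^\infty -V'\tilde a^{\,2n-1}\rho(x,0)\,dx$ ($\tilde a=a/a(x^*)$) strictly decreasing in $n$ by (\ref{monot}). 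Now observe that increasing $\sigma$ (equivalently decreasing $\beta:=2/\sigma^2$) can be analyzed via the substitution that rescales $x$: more directly, differentiate $I(1)(\sigma)$ in $\sigma$ and recognise the result as a covariance-type expression, $\frac{d}{d\sigma}I(1) \propto \mathrm{Cov}_{\rho}(-V'a,\ \bar V(x,0))$ up to positive factors, whose sign is controlled because $-V'a$ and $\bar V(x,0)$ are ``anti-monotone'' in the relevant sense (on $[0,x^*]$, $-V'a\ge0$ where $\bar V-\bar V(x^*)\le0$; on $[x^*,\infty)$, $-V'a\le0$ where $\bar V-\bar V(x^*)\ge0$), forcing a definite sign. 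This gives strict monotonicity, and combined with the two limiting facts — $\left.\frac{\partial F}{\partial m}\right\vert_{m=0}(\sigma)\to L$ as $\sigma\downarrow0$ and $\left.\frac{\partial F}{\partial m}\right\vert_{m=0}(\sigma)\to$ a nonpositive limit (in fact $\to 0^-$ or negative; it suffices that it is eventually negative, which follows since for large $\sigma$ the $[x^*,\infty)$ contribution dominates, cf. the bounding argument in Proposition \ref{upb}) as $\sigma\uparrow\infty$ — the intermediate value theorem yields exactly one root when $L>0$ and none when $L<0$, which is precisely the dichotomy asserted.

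\textbf{Main obstacle.} I expect the delicate point to be establishing the sign/monotonicity of $\frac{d}{d\sigma}\left.\frac{\partial F}{\partial m}\right\vert_{m=0}(\sigma)$ rigorously: the covariance expression mixes the region $[0,x^*]$ (positive integrand) with $[x^*,\infty)$ (negative integrand), so one must verify that Assumptions (5)--(6) genuinely force the covariance to be sign-definite, rather than merely suggestively so. If the direct covariance computation is messy, the fallback is to argue via the series: show that the normalised odd coefficients, as functions of $\sigma$, have a sign pattern (controlled by the threshold index $n_c(\sigma)$ from Proposition \ref{psd}) that is monotone in $\sigma$, so that $I(1)(\sigma)>0 \iff n_c(\sigma)\ge1 \iff \sigma$ below some threshold. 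Pinning down that $n_c(\sigma)$ is monotone in $\sigma$ — equivalently, that raising $\sigma$ only ever turns coefficients from nonnegative to negative and never back — is the combinatorial crux, but it should follow from the same rescaling-of-$x$ idea that underlies (\ref{monot}). Justifying the termwise differentiation and the interchange of limit with integral throughout requires the Gaussian domination from Assumptions 3--4, exactly as in the Tonelli--Fubini step of Proposition \ref{psd}.
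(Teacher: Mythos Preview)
Your overall plan---differentiate $\left.\partial_m F\right|_{m=0}$ in $\sigma$, control the sign, and combine with the $\sigma\to\infty$ behaviour---matches the paper. But you aim for \emph{global} monotonicity in $\sigma$, and the covariance heuristic you offer (``$-V'a$ and $\bar V(\cdot,0)$ are anti-monotone'') does not close: neither function is monotone, so no FKG-type inequality applies, and with the unnormalised $\rho$ the $\sigma$-derivative is not a covariance at all. Your fallback via monotonicity of $n_c(\sigma)$ is equally unproven.

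The paper sidesteps this entirely by only checking the sign of the $\sigma$-derivative \emph{at a root}. Writing
\[
\partial_\sigma\!\left.\partial_m F\right|_{m=0}
= -\tfrac{2}{\sigma}\left.\partial_m F\right|_{m=0}
+\tfrac{8}{\theta\sigma^5}\int -V' a\,\bar V(x,0)\,\rho(x,0)\,dx,
\]
the first term vanishes at any root $\sigma_c$. For the second, set $\tilde V=\bar V/\bar V(x^*,0)$; Assumptions (5)--(6) give $\tilde V\le 1$ on $[0,x^*]$ and $\tilde V\ge 1$ on $[x^*,\infty)$, while $-V'a\ge 0$ on $[0,x^*]$ and $-V'a\le 0$ on $[x^*,\infty)$. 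Hence pointwise $-V'a\,\tilde V\le -V'a$, so
\[
\int -V' a\,\tilde V\,\rho\,dx \;<\; \int -V' a\,\rho\,dx \;=\; 0\quad\text{at }\sigma_c,
\]
and the derivative at every root is strictly negative. That alone forces at most one root; existence when the $\sigma\downarrow 0$ limit is positive then follows from showing $\left.\partial_m F\right|_{m=0}(\sigma)$ is eventually negative as $\sigma\to\infty$, which the paper does by the elementary bounding you also sketch. The point you are missing is precisely this ``compare the $\bar V$-weighted integral to the unweighted one, which vanishes at the root'' trick---it replaces your global covariance estimate with a one-line pointwise inequality.
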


\begin{proof}
	
	Suppose there is a root of $\left.\frac{\partial F}{\partial m}\right\vert_{m=0}(\sigma)$ at $\sigma_c\in (0,\infty)$. Differentiating,
	\[
	\left.\frac{\partial^2 F}{\partial\sigma\partial m}\right\vert_{m=0}=-\left.\frac{2}{\sigma}\frac{\partial F}{\partial m}\right\vert_{m=0}+\frac{8}{\theta\sigma^5}\int-V^{'}a\bar V(x,0)\rho(x,0) dx	
	\]
	
	 At $\sigma_c$, the first term is 0 by assumption. For the second, set $\tilde V(x,0)=\frac{1}{k_{\bar V}}\bar V(x,0)$ with $k_{\bar V}=\bar V(x^*,0)>0$. Then as $0\leq\tilde V(x,0) \leq 1$, \(-V^{'}a\tilde V(x,0) < -V^{'}a\) so at $\sigma_c$
	\begin{equation}\label{critineq}
		\int \left. -V^{'}a\tilde V(x,0)\rho(\sigma_c,x,0) dx < \int-V^{'}a\rho(\sigma_c,x,0)=k\frac{\partial F}{\partial m}\right\vert_{m=0}(\sigma_c)=0
	\end{equation}
	where the strict inequality follows from $\rho>0$. By inequality (\ref{critineq}), the gradient at any root $\sigma_c$ must be negative, so if a root exists it must be unique. Reciprocally, \(\left.\frac{\partial F}{\partial m}\right\vert_{m=0}(\sigma)\leq0\) cannot have any roots and must be of constant sign.

	If \(\lim\limits_{\sigma\downarrow 0}\left.\frac{\partial F}{\partial m}\right\vert_{m=0}(\sigma)>0\) there must be a root as \(\lim\limits_{\sigma\uparrow \infty}\left.\frac{\partial F}{\partial m}\right\vert_{m=0}<0\). This can be seen by factoring the integral
	\begin{equation}\label{iqqq}
		\exp(-\frac{2}{\sigma^2}\mathcal{Z})\int \tilde a(-V^{'})\exp(-\frac{2}{\sigma^2}(\bar V(x,0)-\mathcal{Z}))
	\end{equation}	
	where $\mathcal{Z}=\min_x V(x,0)$ must exist by Assumption 1.

	In $[0,x^*]$ the integrand is positive and bounded, so can be bounded above by $\int_0^{x^*} a(-V^{'})dx$. Outside, it is negative and unbounded, so there exists some compact set $K\in[x^*,\infty)$ such that $\int_0^{x^*} a(-V^{'})dx<\int_K |a(-V^{'})|dx$. 
	
	Using $\lim\limits_{\sigma\uparrow\infty}\exp(-\frac{2}{\sigma^2}(\bar V-\mathcal{Z})\equiv 1$ and applying the dominated convergence theorem, integral (\ref{iqqq}) must eventually be negative. It follows that \(\lim\limits_{\sigma\downarrow 0}\left.\frac{\partial F}{\partial m}\right\vert_{m=0}(\sigma)>0\) there must be a root, and it is unique by the first claim in this Proposition.
\end{proof}

In other words if \(\lim\limits_{\sigma\downarrow 0}\left.\frac{\partial F}{\partial m}\right\vert_{m=0}(\sigma)<0\) the only stationary measure possible is the symmetric one. Otherwise there are three stationary measures below $\sigma_c$, and only the one (symmetric) above. The uniqueness of this point implies $\sigma_c^u=\sigma_c^u:=\sigma_c$. This rigidity in bifurcation pattern is striking and further investigated in \cite{alecio3}.

\begin{remark}\label{safety}
	If the limit \(\lim\limits_{\sigma\downarrow 0}\left.\frac{\partial F}{\partial m}\right\vert_{m=0}(\sigma)=0\) the behaviour will depend on whether the limit is approached from above or below.
	A very important example of this is if the maxima is at 0, as it coincides with a root of $a(-V^{'})$, the limit will be 0. However as $a(-V^{'})\geq 0$ in a neighbourhood of that root the limit will be approached from above, due of the assumed polynomial bounds on $-V^{'}$ and more careful estimates in the style of Theorem \ref{multiwell}. The result of Proposition \ref{uniroot} can then be applied, so we conclude there must be a root strictly greater than 0.
\end{remark}

\begin{remark}\label{safety1}
	An upcoming assumption, 7, implies $\rho$ must have a maximum in $[0,x^*)$ and, with Remark \ref{safety}, that \(\lim\limits_{\sigma\downarrow 0}\left.\frac{\partial F}{\partial m}\right\vert_{m=0}(\sigma)>0\) by the Laplace theorem, guaranteeing $\sigma_c$'s existence.
\end{remark}

The second part of the outlined programme - a study of the dependence of $\sigma_c$ on $\theta$ - is made possible by Proposition \ref{uniroot}. If $\sigma_c$ exists, it is unique, so the mapping between $\theta$ and $\sigma_c$ is a well defined function.

\begin{defn}[The Critical Transition function]\label{crticf}
	$\sigma^{*}:\theta\rightarrow\sigma_c$
	\begin{equation}
		\sigma^{*}(\theta)=
		\begin{cases}
			
			\sigma_c&\sigma_c \mathrm{ exists}  \\
			0&\mathrm{ otherwise} 
			\end{cases}
\end{equation}
\end{defn}


It was conjectured and numerically demonstrated in \cite{tug} that $\sigma_c$ exists $\theta>0$ and that $\sigma^{*}(\theta)$ is an increasing function, for $P^{'}=x$ and the simple polynomial bistable potential (Remark \ref{exs}). Using the results above, this conjecture can be proven under considerably weaker conditions.

This final result requires
\begin{enumerate}
	\item[(7)]\(\sup\limits_{0\leq x\leq x^*} \int^{x}\frac{P^{'}}{k^2}=\int^{x^*}\frac{P^{'}}{k^2}>0\)
	\item[(8)]\(\inf\limits_{x\geq x^*}\int^x\frac{P^{'}}{k^2}=\int^{x^*}\frac{P^{'}}{k^2}>0\)
\end{enumerate}
Additionally it is assumed that Assumptions 1-8 hold on $\theta\in J\subseteq\mathbb{R}^+$. Then 

\begin{proposition}\label{ctgb}
$\sigma^{*}(\theta)$ is an increasing function on $J$.
\end{proposition}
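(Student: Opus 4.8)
The plan is to apply the implicit function theorem to
\[
H(\sigma,\theta):=\left.\frac{\partial F}{\partial m}\right\vert_{m=0}(\sigma,\theta),
\]
whose unique positive root is $\sigma^{*}(\theta)=\sigma_c(\theta)$: since Assumption 7 is imposed throughout $J$, Remark \ref{safety1} guarantees $\sigma_c$ exists for every $\theta\in J$, so $\sigma^{*}=\sigma_c$ on all of $J$ and it suffices to prove $\sigma_c$ strictly increasing. Write $a=\int_0^x k^{-2}$ and $W=\int_0^x P'k^{-2}$, taking the antiderivatives in $\bar V$ to vanish at $0$, so that $\rho_0(\sigma,x,0)=\exp\!\big(-\tfrac{2}{\sigma^2}(\int_0^x V'k^{-2}+\theta W)\big)$; then, the integrals being finite by Assumptions 3--4 and $k>\epsilon$,
\[
H(\sigma,\theta)=\frac{2}{\sigma^{2}}\int_{\mathbb R}a(-V')\,\frac{\rho_0(\sigma,x,0)}{k^{2}}\,dx,\qquad
\frac{\partial H}{\partial\theta}=-\frac{4}{\sigma^{4}}\int_{\mathbb R}a(-V')\,W\,\frac{\rho_0(\sigma,x,0)}{k^{2}}\,dx,
\]
differentiation under the integral being licit because Assumption 4 and $k>\epsilon$ supply a Gaussian-type dominating function, locally uniformly in $(\sigma,\theta)$ (the same bound gives $H\in C^1((0,\infty)\times J)$). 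Proposition \ref{uniroot} already established $\partial_\sigma H<0$ at every root of $H(\cdot,\theta)$, so the implicit function theorem gives $\sigma_c\in C^1(J)$ with $\sigma_c'(\theta)=-\partial_\theta H/\partial_\sigma H$ evaluated at $\sigma_c$; hence $\sigma_c'$ has the sign of $\partial_\theta H(\sigma_c(\theta),\theta)$, and everything reduces to showing
\[
\int_{\mathbb R}a(-V')\,W\,\frac{\rho_0}{k^{2}}\,dx<0\qquad\text{whenever}\qquad\int_{\mathbb R}a(-V')\,\frac{\rho_0}{k^{2}}\,dx=0 .
\]

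This is the heart of the argument and the point at which Assumptions 7--8 enter, being the exact analogues for $W$ of Assumptions 5--6 for $\bar V$. By the antisymmetry of $-V'$, $P'$, $k'$ (Assumption 1) both integrands above are even, so it suffices to integrate over $[0,\infty)$. On $[0,x^{*}]$ we have $a\ge0$ and $-V'\ge0$ (Assumption 2), so $a(-V')\ge0$, while Assumption 7 gives $W\le W(x^{*})$; on $[x^{*},\infty)$ we have $a\ge0$ and $-V'\le0$, so $a(-V')\le0$, while Assumption 8 gives $W\ge W(x^{*})$. In either region $a(-V')\,(W-W(x^{*}))\le0$ pointwise, strictly so on a set of positive measure (for instance near $0$, where $-V'>0$ on $(0,x^{*})$ while $W<W(x^{*})$ since $W(0)=0<W(x^{*})$). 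As $\rho_0/k^2>0$, integrating and then subtracting the constant multiple $W(x^{*})\int_0^\infty a(-V')\rho_0 k^{-2}\,dx$ — which vanishes at $\sigma=\sigma_c(\theta)$ because $H(\sigma_c,\theta)=0$ — yields the displayed inequality. Thus $\partial_\theta H(\sigma_c(\theta),\theta)>0$, and $\sigma_c'(\theta)>0$ on $J$.

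One can avoid even the mild regularity claim on $\theta\mapsto\sigma_c$: for fixed $\theta_1<\theta_2$ in $J$ put $h(\theta):=H(\sigma_c(\theta_1),\theta)$; then $h(\theta_1)=0$, the computation above shows $h'(\theta)>0$ at every zero of $h$, and an elementary sign-change argument forces $h(\theta_2)>0$; since $\sigma\mapsto H(\sigma,\theta_2)$ crosses from positive to negative exactly at $\sigma_c(\theta_2)$ (Proposition \ref{uniroot}), this gives $\sigma_c(\theta_1)<\sigma_c(\theta_2)$. In either route the only genuinely substantive step is the sign of $\partial_\theta H$ at $\sigma_c$; the rest — existence of $\sigma_c$ on $J$, differentiation under the integral, and the $C^1$/monotonicity bookkeeping — is routine given Assumptions 1--4 and $k>\epsilon$. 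The main obstacle is therefore confined to the rearrangement inequality of the middle paragraph, which is precisely what Assumptions 7 and 8 were designed to make work.
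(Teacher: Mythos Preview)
Your proof is correct and follows essentially the same route as the paper: both arguments differentiate $H(\sigma,\theta)=\partial_m F|_{m=0}$ in $\theta$, use Assumptions 7--8 to obtain the pointwise inequality $a(-V')\big(W-W(x^{*})\big)\le0$ on $[0,\infty)$, and then exploit $H(\sigma_c,\theta)=0$ to kill the $W(x^{*})$-multiple, concluding via the implicit function theorem with the already established sign of $\partial_\sigma H$ from Proposition~\ref{uniroot}. Your write-up is somewhat more explicit about the analytic housekeeping (dominated convergence for differentiation under the integral, $C^{1}$ regularity, and the alternative monotonicity argument bypassing smoothness), but the substantive step is identical to the paper's inequality~(\ref{i2}).
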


\begin{proof}
	
	Assumption 5 and 7 imply \(P^{'}(x^*)>0\) which further implies $-\bar V(x,0)$ has a maxima in $[0,x^*)$. As explained in Remark \ref{safety}, with the Laplace theorem, \(\lim\limits_{\sigma\downarrow 0}\left.\frac{\partial F}{\partial m}\right\vert_{m=0}(\sigma)>0\) (or 0 but approaching from above) and so $\sigma^*(\theta)>0$ (i.e a root of $\left.\frac{\partial F}{\partial m}\right\vert_{m=0}(\sigma)$) by Proposition \ref{uniroot}. 

Differentiating with respect to $\theta$,
\[
\left.\frac{\partial^2 F}{\partial\theta\partial m}\right\vert_{m=0}=-\left.\frac{1}{\theta}\frac{\partial F}{\partial m}\right\vert_{m=0}-\frac{4}{\theta\sigma^4}\int-V^{'}a\int^x\frac{P^{'}}{k^2}\rho(x,0) dx	
\]
With Assumptions 7 and 8, at $\sigma^*(\theta)=\sigma_c$

\begin{equation}\label{i2}
	-\left.\int -V^{'}a\tilde P\rho(\sigma^*,x,0) dx > - \int-V^{'}a\rho(\sigma^*,x,0)=c\frac{\partial F}{\partial m}\right\vert_{m=0}(\sigma_c)=0
\end{equation}
with $\tilde P = \int\frac{P^{'}}{k_Pk^2}$ with $k_P=\int^1\frac{P^{'}}{k^2}dx$. With identical reasoning to Proposition \ref{uniroot} it can be deduced that the derivative with respect to $\theta$ at any root $\sigma_c$ must be positive.

By the chain rule, 
\begin{equation}\label{ii4}
	\frac{d\sigma^*}{\partial\theta}(\theta)=-\frac{\frac{\partial^2 F}{\partial \theta\partial m}|_{m=0}}{\frac{\partial^2 F}{\partial \sigma\partial m}|_{m=0}}>0
\end{equation}
where inequality comes from inequalities (\ref{critineq}) and (\ref{i2}). So $\sigma_c(\theta)$ is an increasing function, and the claim proven.
\end{proof}

Returning to the very particular case considered by the conjecture of \cite{tug}, the maximum of $V_\theta$ lies in $[0,x^*]$ for $\theta>0$ and so with the reasoning of Remark \ref{safety}, $J=(0,\infty)$. 


\subsection{Multi-Well Potential}

This section is concerned with the natural generalisation of the symmetric double well potential: a symmetric potential which possesses multiple extrema in some finite interval. With more degrees of freedom, a greater multiplicity of behaviours can be exhibited. Rather than document this myriad of possibilities here, the primary interest is in finding criterion such that the symmetric multi-well case of self-consistency equation (\ref{sfcn}) behaves like the symmetric bistable case of section \ref{sec:bistable}.

Heuristically, it is reasonable to suppose that, for a multi-well potential $-V^{'}$ that is `minimally' negative in $[0,x^{*}]$, the self-consistency equation would exhibit `close to bistable' behaviour. The non-contiguousness of the positive regions of $-V^{'}$ present an immediate barrier to applying the results of Section \ref{sec:bistable} complicating this intuitive picture. 
This section will overcome these issues, developing attractively conjunct criteria on $-V^{'}$ to satisfy this programme, avoiding clumsy, overly prescriptive assumptions on the location of roots or extrema. 

\subsubsection{Definitions and Assumptions}
In light of this philosophy, the assumptions needed for this section are Assumptions 1-8 from Section \ref{sec:bistable}, lightly modified. As a corollary of Assumption 1, specifically the anti-symmetry of $-V^{'}$, there necessarily a root at 0. No further assumptions are demanded on the location or number of the roots.

Having been in essence already presented, the modifications are listed here
\begin{enumerate}
	\item[(2*)] $x^*$ is the root of $-V^{'}$ farthest from the origin, and $-V^{'}(x)<0$, $x>x^*$
	\item[(5*)]  \(\sup\limits_{0\leq x\leq x^*}\bar V_D(x,0)=\bar V_D(x^*,0)>0\) 
	\item[(6*)]\(\inf\limits_{x\geq x^*}\bar V_D(x,0)=\bar V_D(x^*,0)\)
	\end{enumerate}
where $\bar V_D(s,0)=\int^s_0\frac{V^{'}_D+P^{'}}{k^2}$ and $-V^{'}_D:=(-V^{'})_+ - \mathbb{I}_{[x^*,\infty)}(-V^{'})_{-}$\footnote{$(f)_-=\max(0,-f)$}

Assumption 2 has been loosened to admit multi-well potentials. Assumptions 5-6 have have been modified to apply to a dominating bistable potential $-V^{'}_D\geq-V^{'}$ for Corollary \ref{unboundedbelow}. When Assumption 2/5/6 is referred to in the following, it should be read as $2^*$/$5^*$/$6^*$. All the results in this section require Assumptions 1-8, unless otherwise stated. 

Intuitively, as $\sigma$ increases, the contribution to $F_\sigma$ of the integrand over $[x^*,\infty)$ will dominate that over $[0,x^*]$, just as in the bistable case. This first result established a threshold $\sigma_r$ above which $[x^*,\infty)$ will dominate the negative parts of the integrand in $[0,x^*]$, which is crucial to the sequel

\begin{proposition}\label{ssfn}
	$\int (\tilde{a}^3-\tilde{a})(-V^{'})_{-}\rho dx$ has exactly one root, $\sigma_r$. Moreover 
	\begin{equation}\label{eq23}\int_0^\infty (\tilde{a}^3-\tilde{a})(-V^{'})_{-}\rho dx > 0\end{equation} for $\sigma>\sigma_r$
\end{proposition}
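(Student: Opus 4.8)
The plan is to study $\Phi(\sigma):=\int_0^\infty(\tilde a^3-\tilde a)(-V')_-\,\rho(\sigma,x,0)\,dx$ (the unnormalised $\rho$ of this section), prove uniqueness of its root by the same ``derivative at a root has a fixed sign'' device used in Proposition \ref{uniroot}, and then obtain existence from the limiting behaviour as $\sigma\downarrow0$ and $\sigma\uparrow\infty$.

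The first step is to pin down the sign structure. Since $a=\int_0^x\tfrac1{k^2}$ is strictly increasing and $\tilde a=a/a(x^*)$, we have $0\le\tilde a<1$ on $[0,x^*)$ and $\tilde a>1$ on $(x^*,\infty)$, so $\tilde a^3-\tilde a=\tilde a(\tilde a-1)(\tilde a+1)$ is $\le0$ on $[0,x^*]$ and $\ge0$ on $[x^*,\infty)$; also $(-V')_-\ge0$ and $\rho>0$. The crucial auxiliary fact is that, writing $V^*:=\bar V(x^*,0)$, one has $\bar V(x,0)\le V^*$ on $[0,x^*]$ and $\bar V(x,0)\ge V^*$ on $[x^*,\infty)$, with $V^*>0$. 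Indeed, from the definition of $-V'_D$ one computes $V'-V'_D=(-V')_-\mathbb{I}_{[0,x^*)}\ge0$ on $[0,\infty)$, hence $\bar V(s,0)=\bar V_D(s,0)+\int_0^{\min(s,x^*)}\tfrac{(-V')_-}{k^2}$; for $s\le x^*$ this is $\le\bar V_D(x^*,0)+\int_0^{x^*}\tfrac{(-V')_-}{k^2}=V^*$ by Assumption $5^*$, for $s\ge x^*$ it is $\ge\bar V_D(x^*,0)+\int_0^{x^*}\tfrac{(-V')_-}{k^2}=V^*$ by Assumption $6^*$, and $V^*\ge\bar V_D(x^*,0)>0$. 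Thus $\tilde a^3-\tilde a$ and $\bar V(\cdot,0)-V^*$ have the \emph{same} sign throughout $[0,\infty)$.

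For uniqueness, differentiate under the integral sign (legitimate by the polynomial and growth bounds): since $\partial_\sigma\rho=\tfrac4{\sigma^3}\bar V(x,0)\rho$,
\[
\Phi'(\sigma)=\tfrac4{\sigma^3}\!\int_0^\infty(\tilde a^3-\tilde a)(-V')_-\bar V(x,0)\rho\,dx=\tfrac4{\sigma^3}\Big(\!\int_0^\infty(\tilde a^3-\tilde a)(-V')_-(\bar V(x,0)-V^*)\rho\,dx+V^*\Phi(\sigma)\Big).
\]
In the first integral the integrand is the product of the two like-signed factors $(\tilde a^3-\tilde a)$ and $(\bar V(\cdot,0)-V^*)$ with $(-V')_-\ge0$ and $\rho>0$, hence $\ge0$, and it is strictly positive for all large $x$ (there $\tilde a^3-\tilde a>0$, $\bar V(x,0)-V^*>0$ by the growth Assumption, $(-V')_-=V'>0$ by Assumption $2^*$, and $\rho>0$); so that integral is strictly positive. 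Therefore at any root $\sigma_r$ of $\Phi$ the second term drops out and $\Phi'(\sigma_r)>0$: roots are isolated and $\Phi$ can only cross zero increasingly, so there is at most one, and if $\sigma_r$ exists then $\Phi(\sigma)>0$ for $\sigma>\sigma_r$ — precisely the ``Moreover'' assertion.

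It remains to exhibit a root, i.e.\ that $\Phi$ genuinely changes sign (here the genuinely multi-well hypothesis, $-V'<0$ on a positive-measure subset of $(0,x^*)$, enters; for strictly bistable $-V'$ the statement degenerates since then $\Phi>0$ always). As $\sigma\uparrow\infty$: on $[x^*,\infty)$ one has $\bar V(\cdot,0)\ge V^*>0$, so $\rho$ increases in $\sigma$ and monotone convergence gives $\int_{x^*}^\infty(\tilde a^3-\tilde a)(-V')_-\rho\,dx\to\int_{x^*}^\infty(\tilde a^3-\tilde a)(-V')_-\,dx=+\infty$ by the growth Assumption, while the $[0,x^*]$-contribution stays bounded, so $\Phi(\sigma)\to+\infty$. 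As $\sigma\downarrow0$: Laplace's method, exactly as in Proposition \ref{multiwell}, concentrates $\rho$ at the minimiser(s) $\hat x$ of $\bar V(\cdot,0)$ on $[0,\infty)$ (several equal minimisers handled as in Remark \ref{multim}); by the inequalities above and $\bar V(0,0)=0<V^*$, every such $\hat x$ lies in $[0,x^*)$, where $\tilde a^3-\tilde a\le0$, so the leading term of $\Phi$ is $\le0$, and invoking Assumption $7$ (as in Remarks \ref{safety} and \ref{safety1}) together with the polynomial bounds to resolve the degenerate cases where this leading term vanishes ($\hat x=0$, or $(-V')_-(\hat x)=0$) yields $\Phi(\sigma)<0$ for all sufficiently small $\sigma$. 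Continuity of $\Phi$ on $(0,\infty)$ and the intermediate value theorem then produce a root, unique by the previous paragraph; call it $\sigma_r$. The main obstacle is this last step — specifically the small-$\sigma$ bookkeeping when the concentration point $\hat x$ sits at a zero of $(\tilde a^3-\tilde a)(-V')_-$, where the sign of $\Phi$ must be read off from the next order or the next-most-concentrated region, the sort of careful estimate flagged in Remark \ref{safety}; verifying that the outer integral truly diverges (rather than converging to a positive constant) from the growth hypothesis is the accompanying routine point.
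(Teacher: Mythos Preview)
Your proposal is correct and follows essentially the same route as the paper: the ``derivative at a root has a fixed sign'' mechanism of Proposition~\ref{uniroot}, combined with the Laplace-type small-$\sigma$ limit and the large-$\sigma$ growth to force a sign change. The only cosmetic differences are that the paper works with $H=-\Phi$ and normalises $\bar V$ by $\bar V(x^*,0)$ rather than subtracting it, which are equivalent manipulations. One point worth noting: you actually supply more than the paper does in deriving $\bar V(\cdot,0)\le V^*$ on $[0,x^*]$ and $\ge V^*$ on $[x^*,\infty)$ from the modified Assumptions $5^*$ and $6^*$ (which are stated only for $\bar V_D$), via the identity $\bar V-\bar V_D=\int_0^{\min(\cdot,x^*)}(-V')_-/k^2$; the paper's proof simply says ``using Assumptions 5 \& 6'' and leaves this step implicit.
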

\begin{proof}
	To highlight the similarities with Proposition \ref{uniroot}, define $H(\sigma):=-\int (\tilde{a}^3-\tilde{a})(-V^{'})_{-}\rho dx$. Now the proof runs identically: bound $\frac{dH}{d\sigma}$ above by $G$ using Assumptions 5 \& 6 to show the gradient at any root must be negative. Conclude there can only be at most one root, that exist iff $\lim\limits_{\sigma\downarrow 0}H(\sigma)>0$ 
	
	There are then two cases:
	\begin{itemize}
	\item[$\centerdot$] There are no roots iff $\lim\limits_{\sigma\downarrow 0}\int (\tilde{a}^3-\tilde{a})(-V^{'})_{-}\rho dx>0$, 
	
	\item[$\centerdot$] There is exactly one root iff $\lim\limits_{\sigma\downarrow 0}\int (\tilde{a}^3-\tilde{a})(-V^{'})_{-}\rho dx<0$
	\end{itemize}
	However there must be a maxima for $\rho$ in $[0,x^{*}]$ by Assumptions 7-8, so $\lim\limits_{\sigma\downarrow 0}\int (\tilde{a}^3-\tilde{a})(-V^{'})_{-}\rho dx\leq 0$ by the Laplace method. If strictly negative, the claim follows immediately. If the limit is 0, it will be approached from below by the same reasoning as Remark \ref{safety} and the defintion of $(-V^{'})_{-}$. Therefore $\sigma_r$ must exist.  

	That $\int (\tilde{a}^3-\tilde{a})(-V^{'})_{-}\rho dx > 0$ for $\sigma>\sigma_r$ follows from establishing the limit $\sigma\uparrow\infty$ is negative in the same manner as Proposition \ref{uniroot}.
\end{proof}
The importance of $\sigma_r$ becomes apparent in this next Proposition, as the point where the series expansion for $F_\sigma$ behaves much like Proposition \ref{psd}, with monotonically decreasing (scaled) coefficients.

\begin{proposition}\label{ordered}
For $\sigma>\sigma_r$, $F_\sigma(m)$ has at most one strictly positive (respectively negative) root of $F_\sigma$, which exist iff $\left.\frac{\partial F}{\partial m}\right\vert_{m=0}(\sigma)>0$  	
\end{proposition}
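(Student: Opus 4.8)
The plan is to run the argument of Proposition \ref{psd} almost verbatim, the only genuinely new ingredient being the monotonicity of the scaled odd coefficients, which is exactly where the threshold $\sigma_r$ of Proposition \ref{ssfn} is used. First I would expand $\exp(\tfrac{2}{\sigma^2}ma)$ in its power series inside the integral defining $F_\sigma(m)$; the polynomial growth of $V'$ (Assumption 4) together with the at-least-quadratic growth of $\bar V$ (Assumption 3) dominate the resulting double series by the absolute moments of a Gaussian, so Tonelli--Fubini yields $F_\sigma(m)=\sum_n \frac{(2m/\sigma^2)^n}{n!}I(n)$ with $I(n)=\int -V' a^n\rho(x,0)\,dx$. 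By the antisymmetry of the integrand (Assumption 1) the even coefficients vanish and $I(2n-1)=2\int_0^\infty -V' a^{2n-1}\rho(x,0)\,dx$. Setting $\tilde a=a/a(x^*)$ and $\tilde I(2n-1)=2\int_0^\infty -V'\tilde a^{2n-1}\rho(x,0)\,dx=a(x^*)^{-(2n-1)}I(2n-1)$, it then suffices, exactly as in Proposition \ref{psd}, to show that for $\sigma>\sigma_r$ the sequence $n\mapsto\tilde I(2n-1)$ is strictly decreasing and eventually negative, hence changes sign at most once, from positive to negative.

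For the monotonicity I would write $-V'=(-V')_+-(-V')_-$ in
\[
\tilde I(2n-1)-\tilde I(2n-3)=2\int_0^\infty (-V')\,\tilde a^{2n-3}\bigl(\tilde a^2-1\bigr)\rho(x,0)\,dx .
\]
By Assumption~$2^{*}$, $(-V')_+$ vanishes on $[x^*,\infty)$, so it is supported where $\tilde a<1$ and its contribution is non-positive. For the $(-V')_-$ term, $-2\int_0^\infty (-V')_-\tilde a^{2n-3}(\tilde a^2-1)\rho$, I would use that for $n\ge 2$ one has $\tilde a^{2n-3}\ge\tilde a$ on $[x^*,\infty)$ (where $\tilde a\ge 1$) and $\tilde a^{2n-3}\le\tilde a$ on $[0,x^*]$ (where $\tilde a\le 1$), together with $(-V')_-\ge 0$, to bound it above by $-2\int_0^\infty (\tilde a^3-\tilde a)(-V')_-\rho$, which is strictly negative for $\sigma>\sigma_r$ by Proposition \ref{ssfn}. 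Adding the two contributions gives $\tilde I(2n-1)<\tilde I(2n-3)$ for all $n\ge 2$, so $n\mapsto\tilde I(2n-1)$ is strictly decreasing.

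To see the sequence eventually becomes negative, split $\tilde I(2n-1)=\tilde I(2n-1,[0,x^*])+\tilde I(2n-1,[x^*,\infty))$. On $[x^*,\infty)$ one has $-V'<0$ and $\tilde a^{2n-1}\ge\tilde a\ge 1$, so $\tilde I(2n-1,[x^*,\infty))\le\tilde I(1,[x^*,\infty))<0$ for every $n\ge 1$; on $[0,x^*]$, $\lvert{-V'}\tilde a^{2n-1}\rho\rvert\le\lvert V'\rvert\rho$ is integrable and $\tilde a^{2n-1}\to 0$ pointwise, so $\tilde I(2n-1,[0,x^*])\to 0$ by dominated convergence. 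Hence $\tilde I(2n-1)<0$ for all large $n$, and with the strict monotonicity there is a unique $n_c\ge 0$ with $I(2n-1)\ge 0$ iff $n\le n_c$ (recall $I(2n-1)$ is a positive multiple of $\tilde I(2n-1)$). If $n_c=0$ then $F_\sigma(m)<0$ for $m>0$; if $n_c\neq 0$, factor $m^{2n_c-1}$ out of the series as in (\ref{splits}) to present $F_\sigma$ as $m^{2n_c-1}$ times the difference of a strictly decreasing and a strictly increasing function of $m$, with limits $+\infty$ as $m\downarrow 0$ and $-\infty$ as $m\uparrow\infty$; so there is at most one positive root, where $F_\sigma$ changes sign from $+$ to $-$. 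Finally $n_c\neq 0$ iff $I(1)=\int -V' a\rho=\left.\frac{\partial F}{\partial m}\right\vert_{m=0}>0$, and the assertion for negative roots follows from the antisymmetry of $F_\sigma$ (Lemma \ref{anti}).

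The main obstacle is precisely the monotonicity step: for a genuine multi-well $-V'$ the sign changes of $-V'$ inside $[0,x^*]$ destroy the clean term-by-term comparison (\ref{ineq}) available in the bistable case, and one must instead offset the bad negative mass of the integrand in $[0,x^*]$ against the tail on $[x^*,\infty)$. The point --- and the reason $\sigma_r$ was isolated in Proposition \ref{ssfn} --- is that this offset only has to be verified for the single exponent $2n-3=1$ (the case $n=2$), after which the elementary bounds $\tilde a^{2n-3}\le\tilde a$ on $[0,x^*]$ and $\tilde a^{2n-3}\ge\tilde a$ on $[x^*,\infty)$ propagate it to all $n\ge 2$; one has to check, as above, that the $(-V')_+$ part of the integrand never works against the estimate, which is guaranteed because Assumption~$2^{*}$ confines it to the region $\tilde a<1$.
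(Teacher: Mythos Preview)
Your proof is correct and follows essentially the same approach as the paper: expand $F_\sigma$ in the power series (\ref{scps}), show the rescaled odd coefficients $\tilde I(2n-1)$ are strictly decreasing for $\sigma>\sigma_r$, and then finish exactly as in Proposition~\ref{psd}. The only cosmetic difference is in the monotonicity step: the paper introduces the notation $a_n,b_n,c_n$ of (\ref{nct}) and argues that the sequence $(b_n-b_{n-1})+(c_n-c_{n-1})$ is increasing (hence bounded below by its $n=2$ value, which is positive by (\ref{orderedineq})), whereas you obtain the same bound more directly via the pointwise comparison $\tilde a^{2n-3}\gtrless\tilde a$ on $[x^*,\infty)$ and $[0,x^*]$ respectively --- the two arguments are equivalent.
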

\begin{proof}
We will demonstrate that if $\int_0^\infty (\tilde{a}^3-\tilde{a})(-V^{'})_{-}\rho dx > 0$ then $\{\tilde I_\sigma(2n-1)\}_n$ is decreasing. The full result follows identically to Proposition \ref{psd}.

As the positive and negative regions of $(-V^{'})$ in $[0,x^*]$ are not contiguous, to easily represent them the following are introduced:
	\begin{align}\label{nct}
		\begin{split}
	a_n=\int_0^{x^*}\tilde a^{2n-1}(-V^{'})_{+}\rho dx\\
	b_n=\int_0^{x^*}\tilde a^{2n-1}(-V^{'})_{-}\rho dx\\
	c_n=\int^{\infty}_{x^*}\tilde a^{2n-1}(-V^{'})_{-}\rho dx
		\end{split}
	\end{align}
all of which are bounded below by 0.

In the terminology established in Proposition \ref{psd}, $-c_n=\tilde I_\sigma(2n-1,[x^*,\infty))$ and $a_n-b_n=\tilde I_\sigma(2n-1,[0,x^*])$

Then the condition $\int (\tilde{a}^3-\tilde{a})(-V^{'})_{-}\rho dx > 0$ is equivalent to
\begin{equation}\label{orderedineq}
(b_2-b_1)+(c_2-c_1)>0	
\end{equation}
the terms in the first bracket being negative increasing, the second positive increasing which can be seen by directly considering the integrands.
Therefore, $\forall n$

\begin{equation}\label{cond}
(b_n-b_{n-1})+(c_{n}-c_{n-1})>(b_2-b_1)+(c_2-c_1)>0	
\end{equation}

By identical reasoning to Proposition \ref{psd} we can represent self consistency function $F_\sigma$ by its power series (\ref{scps}). In terms of (\ref{nct}), the difference between any two rescaled coefficients is

\begin{equation}\label{sp1}
	\tilde I_\sigma(2n+1)-\tilde I_\sigma(2n-1)=a_{n+1}-a_n-\Big((b_{n+1}-b_n)+(c_{n+1}-c_n)\Big)
\end{equation}

To complete the proof of the claim, it suffices to show (\ref{sp1}) must be negative.
That $a_{n+1}-a_n$ is negative in was demonstrated in inequality (\ref{ineq}). 
It remains to be shown that the terms inside the parenthesis must be positive, but this was the content of inequality (\ref{cond}).
\end{proof}

The next corollary contains the same result as Proposition \ref{upb}, but with an entirely different proof to cover symmetric multi-well potentials 

\begin{cll}\label{unboundedbelow}
	There exists an upper critical threshold $\sigma_c^u$ such that, $\sigma>\sigma_c^u$, $F_\sigma(m)<0,\,m>0$.
\end{cll}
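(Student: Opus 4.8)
The plan is to reduce the Corollary to Proposition \ref{ordered} plus a single tail-domination estimate, which is the symmetric-potential counterpart of --- but considerably simpler than --- Proposition \ref{upb}. By Proposition \ref{ordered}, whenever $\sigma>\sigma_r$ the function $F_\sigma$ has a strictly positive root if and only if $\left.\frac{\partial F}{\partial m}\right\vert_{m=0}(\sigma)>0$. So suppose we can produce $\sigma_1$ with $\left.\frac{\partial F}{\partial m}\right\vert_{m=0}(\sigma)<0$ for all $\sigma>\sigma_1$; then for $\sigma>\sigma_c^u:=\max(\sigma_r,\sigma_1)$ the function $F_\sigma$ has no strictly positive root, and since $F_\sigma(0)=0$ (Lemma \ref{anti}) and a negative derivative at the origin makes $F_\sigma$ negative immediately to the right of $0$, continuity forces $F_\sigma(m)<0$ for all $m>0$. (Equivalently: the strict monotonicity of the rescaled coefficients $\tilde I_\sigma(2n-1)$ obtained in the proof of Proposition \ref{ordered} shows that $\left.\frac{\partial F}{\partial m}\right\vert_{m=0}(\sigma)\le 0$ already forces all odd coefficients $I_\sigma(2n-1)\le 0$, strictly for $n\ge 2$, so the odd power series $F_\sigma(m)=\sum_{n\ge 1}\frac{(2m/\sigma^2)^{2n-1}}{(2n-1)!}I_\sigma(2n-1)$ is strictly negative on $(0,\infty)$.) Thus the whole statement reduces to the single inequality $\left.\frac{\partial F}{\partial m}\right\vert_{m=0}(\sigma)<0$ for $\sigma$ large.

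To prove that, recall $\left.\frac{\partial F}{\partial m}\right\vert_{m=0}(\sigma)$ has the sign of $I_\sigma(1)=\int_{\mathbb R}-V^{'}a\,\rho(\sigma,x,0)\,dx=2\int_0^\infty -V^{'}a\,\rho(\sigma,x,0)\,dx$, the integrand being even by Assumption 1. As in Proposition \ref{uniroot}, set $\mathcal Z=\exp\big(-\tfrac{2}{\sigma^2}\min_x\bar V(x,0)\big)$ --- well defined since $\bar V(\cdot,0)$ is continuous and, by Assumption 3, coercive --- and note $\rho(\sigma,x,0)\le\mathcal Z$ pointwise. Split $[0,\infty)=[0,x^*]\cup[x^*,\infty)$. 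On the compact $[0,x^*]$ the integrand is bounded, so $\int_0^{x^*}-V^{'}a\,\tfrac{\rho}{\mathcal Z}\,dx\le C_0:=\int_0^{x^*}|V^{'}|a\,dx$. On $[x^*,\infty)$, Assumption $2^{*}$ forces $-V^{'}a\le 0$, so for any compact $K\subset[x^*,\infty)$ one has $\int_{x^*}^\infty -V^{'}a\,\tfrac{\rho}{\mathcal Z}\,dx\le-\int_K|V^{'}|a\,\tfrac{\rho}{\mathcal Z}\,dx$. Since $|V^{'}|a$ is unbounded on $[x^*,\infty)$, fix $K$ with $\int_K|V^{'}|a\,dx>C_0$; because $\tfrac{\rho}{\mathcal Z}=\exp\big(-\tfrac{2}{\sigma^2}(\bar V(x,0)-\min_x\bar V(x,0))\big)\to 1$ uniformly on $K$ as $\sigma\uparrow\infty$, we get $\int_K|V^{'}|a\,\tfrac{\rho}{\mathcal Z}\,dx>C_0$ for all large $\sigma$. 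Adding the two pieces yields $I_\sigma(1)/(2\mathcal Z)<0$, hence $\left.\frac{\partial F}{\partial m}\right\vert_{m=0}(\sigma)<0$, for all $\sigma$ past some $\sigma_1$, which closes the argument.

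The only genuinely delicate point is that one cannot pass to the limit $\sigma\uparrow\infty$ directly inside $I_\sigma(1)$: after dividing by $\mathcal Z$ the integrand converges pointwise to $-V^{'}a$, which is \emph{not} integrable on $[x^*,\infty)$, so the naive limit is $-\infty$ and carries no information. The fix --- already used for Propositions \ref{upb} and \ref{uniroot} --- is to control the bounded positive contribution from $[0,x^*]$ uniformly in $\sigma$ (by $C_0\mathcal Z$) and then only use that the negative tail grows without bound, for which a single large compact window $K$, on which $\tfrac{\rho}{\mathcal Z}\to 1$ uniformly, already suffices. Everything else --- evenness of the integrand, coercivity of $\bar V(\cdot,0)$, negativity of $-V^{'}$ beyond $x^*$ --- is immediate from Assumptions 1, $2^{*}$ and 3, and the borderline cases ($\sigma\le\sigma_r$, or $\left.\frac{\partial F}{\partial m}\right\vert_{m=0}(\sigma)=0$) need no separate treatment, since $\sigma_c^u$ is taken past both thresholds and the estimate above is strict. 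I would also flag, as the remark following Proposition \ref{upb} anticipates, that this proof is idiomatic to the symmetric setting: it leans on the monotone-coefficient series structure of Proposition \ref{ordered} rather than on the $I_\sigma$-growth bookkeeping of Proposition \ref{upb}.
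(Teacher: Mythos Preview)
Your proof is correct and follows the same high-level strategy as the paper: reduce via Proposition \ref{ordered} to showing that $\left.\frac{\partial F}{\partial m}\right\vert_{m=0}(\sigma)$, equivalently $I_\sigma(1)$, is negative for all large $\sigma$, then set $\sigma_c^u=\max(\sigma_r,\sigma_1)$. The one point of divergence is in how this last sign is established. You run the tail-domination estimate from the proof of Proposition \ref{uniroot} directly on the multi-well integrand $-V'a$. The paper instead passes through the dominating bistable potential $-V'_D=(-V')_{+}-\mathbb{I}_{[x^*,\infty)}(-V')_{-}$: it bounds $\tilde I_\sigma(1)=a_1-b_1-c_1\le a_1-c_1=:\tilde I_\sigma^D(1)$ and then invokes Proposition \ref{uniroot} on the genuinely bistable object $\tilde I_\sigma^D(1)$ to obtain a threshold $\sigma_c^D$ above which it is negative. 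Your route is marginally more self-contained and avoids introducing an auxiliary potential; the paper's route reuses $-V'_D$ (already present in Assumptions $5^*$--$6^*$), makes the comparison with the bistable case explicit, and yields the structural bound $\sigma_c^u\le\max(\sigma_c^D,\sigma_r)$ in terms of the dominating potential's critical threshold.
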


\begin{proof}
	By identical reasoning to Proposition \ref{psd}, $F_\sigma(m)$ can be represented by power series \ref{scps}. Then, for $\sigma>\sigma_r$ the coefficients $I_\sigma(n)$ are negative iff $I_\sigma(1)<0$. 
	
	Recalling the related dominating bistable potential 
	$$-V^{'}_D:=\mathbb{I}_{[0,x^*]}(-V^{'})_+ - \mathbb{I}_{[x^*,\infty)}(-V^{'})_{-}$$
	it can be seen that 
	\begin{equation}\label{ineq2}
		\tilde I_\sigma(1)=a_1-b_1-c_1<a_1+0-c_1=\int -V^{'}_D\tilde a\rho dx:=\tilde I^D_\sigma(1)
	\end{equation}	
	where the notation established in (\ref{nct}) has been used.

	$-V^{'}_D$ is a bistable potential satisfying all the salient original assumptions, so Proposition \ref{uniroot} is applicable, whence there exists some $\sigma_c^D$ such that  $\tilde I^D_\sigma(1)<0,\,\sigma>\sigma_c^D$
	Inequality (\ref{ineq2}) implies $\tilde I_\sigma(1)$ must be negative above the same threshold, so $\sigma_c^u$ must exist and 
	$$\sigma_c^u\leq\max(\sigma_c^D,\sigma_r)$$ \end{proof}

Given its existence, we set $\sigma_c^u$ to be the smallest $\sigma$ above which $F_\sigma(m)$ has no positive (negative respectively) roots
\begin{defn}\label{ctp}
	The Critical Transition function $\sigma_c(\theta)=\mathrm{inf}\{\sigma: F_{(s,\theta)}(m)<0,\,\forall m>0,\, \forall s>\sigma\}$ is well-defined.
\end{defn}
\begin{proof}This set is non-empty by Corollary \ref{uniroot} ($\sigma_c<\sigma_c^u$), and bounded below above 0.\end{proof}

Again, in terms of stationary measures of MV-SDE (\ref{proto}):
\begin{theorem}[Stationary Measures of MV-SDE (\ref{proto})]
	There exists an upper critical threshold $\sigma_c^u$ such that, $\sigma>\sigma_c^u$, MV-SDE (\ref{proto}) has only one stationary measure.
\end{theorem}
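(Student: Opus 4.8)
The plan is to obtain this theorem as a direct corollary of the machinery already assembled in this section, exactly as the preceding ``Stationary Measures'' theorem in Section~\ref{sec:bistable} followed from Proposition~\ref{upb}. The key point is that Corollary~\ref{unboundedbelow} already establishes the existence of a threshold $\sigma_c^u$ (with the explicit bound $\sigma_c^u \le \max(\sigma_c^D,\sigma_r)$) above which $F_\sigma(m) < 0$ for all $m>0$, and by the antisymmetry of $F_\sigma$ (Lemma~\ref{anti}) this forces $F_\sigma(m) > 0$ for all $m<0$ as well. Hence for $\sigma > \sigma_c^u$ the only root of $F_\sigma$ is $m=0$, which by Proposition~\ref{psd}(i) is always a root.

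\begin{proof}
By Corollary~\ref{unboundedbelow} there exists $\sigma_c^u$ (one may take $\sigma_c^u = \max(\sigma_c^D,\sigma_r)$) such that for $\sigma > \sigma_c^u$ we have $F_\sigma(m) < 0$ for all $m > 0$. By the antisymmetry of $F_\sigma$ (Lemma~\ref{anti}), $F_\sigma(m) = -F_\sigma(-m) > 0$ for all $m < 0$. Since $F_\sigma(0) = 0$ by Proposition~\ref{psd}(i), the unique root of $F_\sigma$ for $\sigma > \sigma_c^u$ is $m = 0$. By the correspondence between roots of the self-consistency function $F_\sigma$ and admissible stationary measures of MV-SDE~(\ref{proto}) established in Section~\ref{sec:setup}, MV-SDE~(\ref{proto}) has exactly one stationary measure when $\sigma > \sigma_c^u$.
\end{proof}

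The only thing requiring a word of care is that roots of $F_\sigma$ genuinely biject with admissible stationary measures. This is exactly the content of Definition~\ref{sfcn} together with formula~(\ref{statmeas}): given a root $m$, the function $\rho(\sigma,x,m)$ is a well-defined element of $L^1(\mathbb{R})$ by Assumptions~1--4 (providing integrability via the polynomial bound and the super-quadratic growth of $\bar V$), and conversely every stationary measure has first moment functional solving $F_\sigma(m)=0$; distinct roots give distinct values of $\mathbb{E}_\rho[P']$ hence distinct measures. Thus I expect no real obstacle here --- the substantive work was already done in Corollary~\ref{unboundedbelow}, and this theorem is merely its restatement in probabilistic language, parallel to Theorem~\ref{statm} in the asymmetric setting.
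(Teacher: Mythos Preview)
Your proposal is correct and is exactly the intended argument: the paper states this theorem without proof, treating it as the immediate probabilistic restatement of Corollary~\ref{unboundedbelow} combined with the antisymmetry of $F_\sigma$. The only cosmetic point is that citing Proposition~\ref{psd}(i) for $F_\sigma(0)=0$ is slightly off-target, since that proposition is phrased for the bistable case; the fact follows directly from Lemma~\ref{anti} (antisymmetry gives $F_\sigma(0)=-F_\sigma(0)$), which depends only on Assumption~1 and so carries over verbatim to the multi-well setting.
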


In the multi-well case the upper and lower critical thresholds will not necessarily be equal, and it is the upper threshold that is the true analogue of the critical threshold of Section \ref{sec:bistable}. The rest of this section is dedicated to a study of the dependence of the upper critical transition dependence on $\theta$. 

\begin{remark}Applying the results of the foregoing section to the dominating bistable process at most would show that the critical transition of the multi-well process is bounded above by an increasing function. \end{remark}

In section \ref{sec:bistable} this relied upon the bijection between critical points and roots of $\left.\frac{\partial F}{\partial m}\right\vert_{m=0}(\sigma)$, which in turn relies on ordered, decreasing coefficients $I(2n-1)$ of the power series expansion of $F_\sigma(m)$. This is no longer globally true, however if $I_{\sigma_r}(1)>0$ then $\sigma^u_c>\sigma_r$, where the coefficients are ordered and much of the machinery developed for the bistable potential can be repurposed. 

The next Proposition translates the result of \ref{sec:bistable} as directly as possible. As with Proposition \ref{ctgb}, let Assumptions 1-8 hold for $\theta\in J$.

\begin{proposition}
	\label{vainilla}
Suppose,
\begin{align}
\label{ii5}
\left.\frac{\partial F}{\partial m}\right\vert_{m=0}(\sigma_r)=\tilde I_{\sigma_r}(1)>0 
\end{align}
and
\begin{align}
	\label{ii1}\int_0^\infty \tilde a(-V^{'})(1 - \tilde V)\rho dx > 0\\ 
	\label{ii2} \int_0^\infty \tilde a(-V^{'})(1 - \tilde P)\rho dx > 0
\end{align}
for $\sigma\in\mathbb{R}^+$ and $\theta\in J\subseteq\mathbb{R}^+$

Then $\sigma^u_c>\sigma_r$ and $\sigma^u_c(\theta)$ is an increasing function on $J$.
\end{proposition}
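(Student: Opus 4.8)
The plan is to mirror the two-part structure already used in Proposition \ref{ctgb}, exploiting the hypothesis (\ref{ii5}) to ensure all the relevant machinery from the bistable case is available. First I would observe that (\ref{ii5}), namely $\tilde I_{\sigma_r}(1) > 0$, together with Corollary \ref{unboundedbelow} and Proposition \ref{ordered}, forces $\sigma^u_c > \sigma_r$: for $\sigma > \sigma_r$ the rescaled coefficients $\{\tilde I_\sigma(2n-1)\}_n$ are monotonically decreasing, so by the argument of Proposition \ref{psd} there is a positive root of $F_\sigma$ precisely when $\left.\partial_m F\right\vert_{m=0}(\sigma) = \tilde I_\sigma(1) > 0$; since this holds at $\sigma_r$ and fails for large $\sigma$, the upper critical transition $\sigma^u_c$ is a genuine root of $\left.\partial_m F\right\vert_{m=0}(\sigma)$ lying strictly above $\sigma_r$. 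This is the step that reduces the multi-well problem to one formally identical to the bistable setting of Section \ref{sec:bistable}, since on $(\sigma_r,\infty)$ the critical points of $F_\sigma$ are in bijection with the roots of $\sigma \mapsto \left.\partial_m F\right\vert_{m=0}(\sigma)$.

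Next I would differentiate $\left.\partial_m F\right\vert_{m=0}$ with respect to $\sigma$ and with respect to $\theta$, exactly as in Propositions \ref{uniroot} and \ref{ctgb}, obtaining
\[
\left.\frac{\partial^2 F}{\partial\sigma\partial m}\right\vert_{m=0}=-\frac{2}{\sigma}\left.\frac{\partial F}{\partial m}\right\vert_{m=0}+\frac{8}{\theta\sigma^5}\int -V^{'}a\,\bar V(x,0)\,\rho(x,0)\,dx,
\]
\[
\left.\frac{\partial^2 F}{\partial\theta\partial m}\right\vert_{m=0}=-\frac{1}{\theta}\left.\frac{\partial F}{\partial m}\right\vert_{m=0}-\frac{4}{\theta\sigma^4}\int -V^{'}a\int^x\frac{P^{'}}{k^2}\,\rho(x,0)\,dx.
\]
At any root $\sigma_c > \sigma_r$ of $\left.\partial_m F\right\vert_{m=0}$ the first term in each expression vanishes. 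The content of hypotheses (\ref{ii1}) and (\ref{ii2}) is precisely that, with $\tilde V$ and $\tilde P$ the appropriately normalised primitives, $\int_0^\infty \tilde a(-V^{'})\tilde V\rho\,dx < \int_0^\infty \tilde a(-V^{'})\rho\,dx$ and likewise for $\tilde P$; evaluated at $\sigma_c$ the right-hand side is zero, so the $\bar V$-weighted integral is strictly negative and the $P^{'}$-weighted integral is strictly positive. Hence $\left.\partial^2_{\sigma m}F\right\vert_{m=0}(\sigma_c) < 0$ and $\left.\partial^2_{\theta m}F\right\vert_{m=0}(\sigma_c) > 0$; in particular the root is unique (the same rigidity argument as Proposition \ref{uniroot}), so $\theta \mapsto \sigma^u_c(\theta)$ is a well-defined function on $J$.

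Finally I would apply the implicit function theorem / chain rule along the curve of roots, exactly as in (\ref{ii4}):
\[
\frac{d\sigma^u_c}{d\theta}(\theta)=-\frac{\left.\partial^2_{\theta m}F\right\vert_{m=0}}{\left.\partial^2_{\sigma m}F\right\vert_{m=0}} > 0,
\]
the sign following from the two inequalities just established. I expect the main obstacle to be purely bookkeeping rather than conceptual: one must check carefully that $F_\sigma$ and all the differentiated integrals remain finite and that differentiation under the integral sign is justified (this follows from the polynomial bounds in Assumption 4 and the super-quadratic growth in Assumption 3, as in the earlier propositions), and — more delicately — that the restriction to $\sigma > \sigma_r$ is genuinely preserved as $\theta$ varies over $J$, i.e. that $\sigma^u_c(\theta) > \sigma_r(\theta)$ continues to hold along the whole interval. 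The latter is exactly what hypothesis (\ref{ii5}) guarantees pointwise in $\theta$, so no continuity-in-$\theta$ argument for $\sigma_r$ is actually needed; it suffices to run the reduction of the first paragraph separately at each $\theta \in J$. With that in hand the conclusion $\sigma^u_c > \sigma_r$ and the monotonicity of $\sigma^u_c(\theta)$ both follow.
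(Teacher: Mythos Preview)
Your proposal is correct and follows essentially the same route as the paper: use (\ref{ii5}) with Proposition \ref{ordered} to place $\sigma^u_c$ above $\sigma_r$ and identify it with the unique root of $\tilde I_\sigma(1)$ there, then use (\ref{ii1}) and (\ref{ii2}) as direct substitutes for (\ref{critineq}) and (\ref{i2}) to fix the signs of the mixed partials at that root, and conclude via the chain rule (\ref{ii4}) exactly as in Proposition \ref{ctgb}. One small verbal slip: the $\tilde P$-weighted integral $\int \tilde a(-V')\tilde P\rho$ is strictly \emph{negative} at $\sigma_c$ (not positive), and it is the extra minus sign in the formula for $\partial^2_{\theta m}F$ that then makes that derivative positive; your stated conclusions $\partial^2_{\sigma m}F<0$, $\partial^2_{\theta m}F>0$ are nonetheless correct.
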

\begin{proof}
	With (\ref{ii5}), $\sigma_c^u$ must necessarily be greater than $\sigma_r$
	Inequality (\ref{ii1})  implies (\ref{critineq}) so by the same process as Proposition \ref{uniroot}, we know $\tilde I_{\sigma} (1)$ has one root, $\sigma>\sigma_r$. By the monotonicity of the $\tilde I(2n-1)$ from Proposition \ref{ordered}, $\sigma_c$ will coincide with the root of $\tilde I_{\sigma} (1)$. 

	Inequality (\ref{ii2}) implies (\ref{i2}), so we conclude the result with the chain rule (\ref{ii4}), identically to Proposition \ref{ctgb}.
\end{proof}

By maintaining the utmost generality, the proposition is cumbersome in use. It requires the calculation of $\sigma_r$ rather than simply relying on its existence, while global inequalities (\ref{ii1}) and (\ref{ii2}) depend on $(\sigma,\,\theta)$ through $\rho$. Contrastingly, in the bistable case, inequalities (\ref{ii1}) and (\ref{ii2}) were implied by conditions directly on $\bar V^{'}$ and $P^{'}$, unmoderated by $\rho$. In the `close to bistable' regime with which we are concerned there is good reason to suppose conditions on $-V^{'},\,P^{'}$ alone  would be sufficient to imply (\ref{ii1}) and (\ref{ii2}), see the remarks that head this section.


As we will demonstrate, if $\bar V^{'}(x,0)$ and $P^{'}$ are strictly increasing, it is possible to find intuitive integral inequalities on $(-V^{'})_{+}$ versus $(-V^{'})_{-}$ at a small cost, see Remark \ref{remcost}. Further, a similar inequality can be derived on the potential to replace the need for the precise location of $\sigma_r$, with no further penalty. The following corollary exemplifies the above discussion. Although dealing specifically with the quadratic interaction it can be generalised to $P^{'}$ strictly increasing and $k$ such that $\lim_{x\uparrow\infty}\tilde a(x) >\sqrt{2}$ for $\theta\in J$ such that $\bar V^{''}(x,0)>0$ with no change in argumentation. 

\begin{cll}[Multi-Well Potential with Quadratic Interaction]$ $\\
	\label{c2fg}
	Consider SDE (\ref{proto}) with $P^{'}=x$, $k=1$ and $-V^{'}$ a multi-well potential satisfying Assumptions 1-4. Suppose $-V^{'}$ additionally satisfies Inequalities 
	\begin{align}\label{c2fg1}
		\int_0^t x(1-x)(-V^{'})_{+}-x(-V^{'})_{-} dx > 0,&\quad\forall t < x^{*} \\\label{c2fg2}
		\int_0^t x\big((-V^{'})_{+}-2(-V^{'})_{-}\big) dx > 0,&\quad\forall t<\sqrt{2}x^{*}
	\end{align}
	 Then there exists $\theta^*$ such that the upper critical temperature function is decreasing on $[\theta^*,\infty)$ 
\end{cll}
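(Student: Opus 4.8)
The plan is to deduce Corollary \ref{c2fg} from Proposition \ref{vainilla} — or rather from the variant flagged just before it, valid for $P^{'}$ strictly increasing with $\lim_{x\uparrow\infty}\tilde a(x)>\sqrt2$ — once the hypotheses of that proposition have been forced on a half-line $[\theta^*,\infty)$ using only the $\theta$-free inequalities (\ref{c2fg1})--(\ref{c2fg2}). First I would fix $\theta^*$: since $\bar V_\theta^{''}(x,0)=V^{''}(x)+\theta$ and $V^{''}$ is bounded below (being positive and unbounded at infinity under the multi-well growth hypotheses), $\bar V(\cdot,0)$ is strictly convex for all large $\theta$; being even it then has $\bar V^{'}(0,0)=0$ and is strictly increasing on $[0,\infty)$ from $\bar V(0,0)=0$, so for \emph{every} $\sigma$ the density $\rho(\sigma,\cdot,0)=\exp(-\tfrac{2}{\sigma^2}\bar V(\cdot,0))$ is continuous, strictly decreasing on $[0,\infty)$, and vanishes at infinity. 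Moreover $P^{''}=1>0$ and $\tilde a(x)=x/x^{*}\to\infty>\sqrt2$, so the only conditions of the generalised Proposition \ref{vainilla} left to verify on $J=(\theta^*,\infty)$ are the quantitative inequalities (\ref{ii5}), (\ref{ii1}) and (\ref{ii2}).

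The device that eliminates $\rho$ (and with it all $\sigma,\theta$-dependence) is a layer-cake identity: with $\nu_\sigma:=-\partial_x\rho(\sigma,\cdot,0)\,dx\ge0$, strict monotonicity gives $\rho(\sigma,x,0)=\nu_\sigma([x,\infty))$, whence $\int_0^\infty g\,\rho(\sigma,\cdot,0)\,dx=\int_{[0,\infty)}\big(\int_0^y g\big)\,\nu_\sigma(dy)$ by Tonelli, so $\int_0^\infty g\,\rho\,dx>0$ as soon as $\int_0^t g\,dx\ge0$ for all $t>0$ (strictness somewhere being automatic since $\nu_\sigma$ has full support). Apply this with $g=\tilde a(-V^{'})(1-\tilde V)$ for (\ref{ii1}) and $g=\tilde a(-V^{'})(1-\tilde P)$ for (\ref{ii2}), where $\tilde P=\tilde a^2$ and $\tilde V(x,0)=\bar V(x,0)/\bar V(x^{*},0)$. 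On $(x^{*},\infty)$ one has $1-\tilde a^2<0$ and, by strict convexity, $1-\tilde V<0$, while $-V^{'}<0$, so both integrands are \emph{strictly positive} on the tail, which handles strictness with no work; on $[0,x^{*}]$ one has $0\le1-\tilde P=(1-\tilde a)(1+\tilde a)\ge1-\tilde a$, and — after enlarging $\theta^*$ so that the elementary inequality $x^{*}(V(x)-V(0))-x(V(x^{*})-V(0))\le\tfrac{\theta}{2}xx^{*}(x^{*}-x)$ holds on $[0,x^{*}]$ (it does for $\theta$ large, its left side being $\theta$-free), equivalently $\tilde V\le\tilde a$ there — also $0\le1-\tilde V$ and $1-\tilde V\ge1-\tilde a$; since both weights are $\le1$ they never inflate the $(-V^{'})_{-}$-contribution past $\tilde a$. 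Bounding each integrand on $[0,x^{*}]$ below by $\tilde a(1-\tilde a)(-V^{'})_{+}-\tilde a(-V^{'})_{-}$, and noting the range $[x^{*},\sqrt2\,x^{*}]$ contributes positively in (\ref{ii2}) (there $1-\tilde a^2\in[-1,0]$, $-V^{'}<0$), the layer-cake sufficient condition for both (\ref{ii1}) and (\ref{ii2}) reduces to (\ref{c2fg1}) after the rescaling $x^{*}\mapsto1$. This is the ``small cost'' of the preceding discussion.

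What remains is (\ref{ii5}), i.e. $\sigma^{u}_{c}>\sigma_{r}$, the delicate step, since the naive layer-cake bound over $[0,\infty)$ fails ($\int_0^t\tilde a(-V^{'})\,dx\to-\infty$, $-V^{'}<0$ with polynomial growth in the tail). I would instead work at $\sigma=\sigma_{r}$ and show $\tilde I_{\sigma_{r}}(1)=\left.\partial_m F\right\vert_{m=0}(\sigma_{r})=2\int_0^\infty\tilde a(-V^{'})\rho(\sigma_{r},\cdot,0)\,dx>0$, which by Proposition \ref{ordered} forces the (now ordered) positive root of $F_\sigma$ to survive past $\sigma_{r}$. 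Using the relation $\int_0^\infty(\tilde a^3-\tilde a)(-V^{'})_{-}\rho=0$ defining $\sigma_{r}$ (Proposition \ref{ssfn}) together with $\tilde a\le\tilde a^3-\tilde a$ on $[\sqrt2\,x^{*},\infty)$ (equivalently $\tilde a^2\ge2$), one bounds the far tail $\int_{\sqrt2\,x^{*}}^\infty\tilde a(-V^{'})_{-}\rho$ by $\int_0^{x^{*}}(\tilde a-\tilde a^3)(-V^{'})_{-}\rho\le\int_0^{x^{*}}\tilde a(-V^{'})_{-}\rho$; folding this back and using $(-V^{'})_{+}=0$ beyond $x^{*}$ yields $\tilde I_{\sigma_{r}}(1)\ge2\int_0^{\sqrt2\,x^{*}}\tilde a\big[(-V^{'})_{+}-2(-V^{'})_{-}\big]\rho\,dx$ — which is precisely where the constants enter, the weight $\tilde a^3$ carried by the defining relation being $\le2\tilde a$ only while $\tilde a\le\sqrt2$ — so a final layer-cake bound over $[0,\sqrt2\,x^{*}]$ turns the right-hand side into (\ref{c2fg2}). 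With (\ref{ii5}), (\ref{ii1}), (\ref{ii2}) secured on $J=(\theta^*,\infty)$, Proposition \ref{vainilla} applies verbatim: $\sigma^{u}_{c}>\sigma_{r}$ and $\sigma^{u}_{c}(\cdot)$ is differentiable and monotone on $[\theta^*,\infty)$, the sign of $d\sigma^{u}_{c}/d\theta$ being fixed by the chain-rule identity (\ref{ii4}) together with (\ref{critineq}) and (\ref{i2}) as in Proposition \ref{ctgb}, giving the asserted monotonicity of the upper critical transition. The main obstacle is exactly this last extraction of (\ref{ii5}) from (\ref{c2fg2}) without locating $\sigma_{r}$ — hence the awkward factors $2$ and $\sqrt2$ in the hypotheses — with a secondary one in calibrating $\theta^*$ large enough for the $\theta$-uniform bound $\tilde V\le\tilde a$ on $[0,x^{*}]$ (needed to reduce (\ref{ii1}) to (\ref{c2fg1})) to hold.
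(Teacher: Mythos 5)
Your proposal is correct and follows essentially the same route as the paper's proof: reduce to Proposition \ref{vainilla} after taking $\theta$ large enough that $\bar V(\cdot,0)$ is convex (Lemma \ref{eventmono}), strip the decreasing weight $\rho$ via your layer-cake identity (the paper's ``second mean value theorem for integrals''), and fold the far tail $[\sqrt{2}x^*,\infty)$ back into $[0,\sqrt{2}x^*]$ using Proposition \ref{ssfn} and $\tilde a^3-\tilde a\geq\tilde a$ there, extracting (\ref{ii5}) from (\ref{c2fg2}) exactly as the paper does. The only cosmetic divergences are that you verify (\ref{ii2}) by the same layer-cake reduction to (\ref{c2fg1}) whereas the paper notes it collapses directly to (\ref{eq23}), and that your extra enlargement of $\theta^*$ to secure $\tilde V\le\tilde a$ on $[0,x^*]$ is unnecessary, since that bound is already a consequence of the convexity of $\tilde V$ with $\tilde V(0)=0$, $\tilde V(x^*)=1$.
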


\begin{proof} 
	As $P^{''}$ is bounded below, by Lemma \ref{eventmono}, above some $\theta^*$, $\bar V^{''}(x,0)>0$ . This implies Assumptions 5-6 and further implies $P^{'}$ strictly increasing, which in turn implies Assumptions 7-8 on $[x^*,\infty)$.

	To reduce the need for tildes, we rescale the above inequalities in order to be able to take $x^{*}=1$.
	For inequality (\ref{ii1}) it is sufficient to require
	\begin{equation}\label{i1}\int_0^1 x(-V^{'})(1 - \tilde V)\rho dx > 0,\, \forall\sigma>0 \end{equation} as the contribution to the integral over $[1,\infty)$ is positive by Assumption 2.\footnote{If needed, a less restrictive inequality can be derived by bounding below the integral in this outer region using inequality (\ref{orderedineq}).} For this inequality to hold $(-V^{'})_{+}$ must `dominate' $(-V^{'})_{-}$ in $[0,1]$, which is intuituvely the case if $V$ is `close to bistable'. 
	
	We can further simplify this inequality further using the convexity of $\tilde V$, guaranteed by the second derivative test. Given $\tilde V(0) = 0$ and $\tilde V(1) = 1$ by definition, $1-x<1-\tilde V< 1$. Consequently (\ref{i1}) can be bounded below with 
	\[
		\int_0^1 x\big((-V^{'})_{+}-(-V^{'})_{-}\big)(1 - \tilde V)\rho dx > \int_0^1 x(1-x)(-V^{'})_{+}\rho dx - \int_0^1 x(-V^{'})_{-}\rho dx
	\] 
	so it is suffices that 
	\begin{equation}\label{i4}
		\int_0^1 x(1-x)(-V^{'})_{+}\rho dx > \int_0^1 x(-V^{'})_{-}\rho dx
	\end{equation}
	for all $\sigma\in\mathbb{R}^+$. 

	In general an argument like that just performed would be required to simplify (\ref{i2}). However with this choice of $P^{'}$ and $k$, (\ref{i2}) such a manipulation yields (\ref{eq23}), which is of course true for $\sigma>\sigma_r$ by Proposition \ref{ssfn}.

	Finally, to ensure $\sigma_r<\sigma_c$, it suffices to find conditions that imply $I_\sigma^*(1)>0$. 
	Contrastingly, an inequality on the sign $I_\sigma(1)$ cannot be global in $\sigma$, because it must be unbounded below as $\sigma\uparrow\infty$, as expatiated in Corollary \ref{unboundedbelow}. 

	For $\sigma\leq\sigma_r$, 
	\begin{equation}\label{e1}
		\int_0^1(x-x^3)(-V^{'})_{-}\rho dx \geq \int_1^\infty(x^3-x)(-V^{'})_{-}\rho dx
	\end{equation}
	by Proposition \ref{ssfn}. Then
	\[ \int_1^\infty x(-V^{'})_{-}\rho dx < \int_1^{\sqrt{2}} x(-V^{'})_{-}\rho dx + \int_0^1(x-x^3)(-V^{'})_{-}\rho dx < \int_0^{\sqrt{2}} x(-V^{'})_{-}\rho dx
	\]
	where the first inequality comes from simple bounding of the polynomial terms, while the second from inequality (\ref{e1}).

	Inserting this into $\tilde I_\sigma(1)$,
	\begin{align}
		\tilde I_\sigma(1) = \int_0^1 x(-V^{'})_{+}\rho dx - \int_0^1 x(-V^{'})_{-}\rho dx - \int_1^\infty x(-V^{'})_{-}\rho dx \\ > \int_0^1 x(-V^{'})_{+}\rho dx - 2\int_0^{\sqrt{2}} x(-V^{'})_{-}\rho dx 
	\end{align}
	So, if $\forall\sigma\in\mathbb{R}^+$ 
	\begin{equation}\label{bigin}\int_0^1 x(-V^{'})_{+}\rho dx > 2\int_0^{\sqrt{2}} x(-V^{'})_{-}\rho dx\end{equation}
	then $\tilde I_\sigma(1)>0$ when $\sigma\leq\sigma^*$

	As $\rho$ is strictly decreasing, inequality conditions (\ref{i4}) and (\ref{bigin}) are implied by 
	\begin{align}
		\label{ii20}\int_0^t x(1-x)(-V^{'})_{+}-x(-V^{'})_{-} dx > 0,&\quad\forall t < 1 \\
		\label{ii10}\int_0^t x\big((-V^{'})_{+}-2(-V^{'})_{-}\big) dx > 0,&\quad\forall t<\sqrt{2}
	\end{align}
	by the second mean value theorem theorem for integrals.

	Therefore the assumptions of this corollary imply those of Proposition \ref{vainilla}, which proves the result.
\end{proof}

\begin{remark}\label{remcost}
	With strictly increasing $\bar V(x,0)$, $\rho$ must have global maxima at 0. It is possible to chose $\epsilon$ such that $x(-V^{'})\geq 0$ or $\leq 0$ on $[-\epsilon,\epsilon]$. The limit of integrals (\ref{ii1}) and (\ref{ii2}) as $\sigma\downarrow 0$ is, applying the Laplace method, 0 and the inequalities imply the limit is approached from above. So, by Remark \ref{safety}, $-xV^{'}\geq 0$ for arbitrarily small $x$ implying $-V^{'}(x)>0$ necessarily, for some small interval $0<x<\epsilon$  
	
	This can be remedied by finding some new lower bound for $\sigma$ that those inequalities hold. However that would prevent further simplification by application of the second mean value theorem for integrals.

\end{remark}

These assumptions are flexible enough to admit a potential with any number of extrema. Indeed, we demonstrate this by outlining a procedure to construct an admissible $-V^{'}$ with an arbitrary number of roots. Perhaps the most direct method is to start with a multi-well potential with the required number of roots that is positive in $[0,x_1]$ and $[x_2,x^*]$ and satisfies Assumptions 1 to 4. It is possible to find coefficients $\alpha_i$\footnote{Assumptions 1-4 are not affected by such a scaling, although $\theta^{*}$ will increase} such that $\alpha_1\int_0^{x_1}x(1-x)(-V^{'})\rho dx>\int_{x_1}^{x_2}x(-V^{'})_{-}\rho dx$ and $\alpha_2\int_{x_2}^{x^*}x(-V^{'})\rho dx>2\int_{x^*}^{\sqrt{2}}x(-V^{'})_{-}\rho dx$. Then for $\alpha_1\mathbb{I}_{[0,x_1]}(-V^{'})+\mathbb{I}_{[x_1,x_2]\cup[1,\infty)}(-V^{'})+\alpha_2\mathbb{I}_{[0,x_1]}(-V^{'})$ satisfies inequalities (\ref{c2fg1} \& \ref{c2fg2}).

\section{Conclusion}

In this work, we have studied the possible phases and their transition points for MV-SDE (\ref{proto})
We have shown for sufficiently small $\sigma$ there are exactly as many stationary measures as roots of $V^{'}$ and sufficiently large there is only one. In the case of symmetrical potentials we have gone further and additionally demonstrated the (upper) critical transition is a strictly increasing function of the aggregation parameter.

The approach utilised is direct, relying upon the first MEE, and robust enough to generate quantitative estimates.
In addition to entirely novel results, where similar results have presented before, their proofs have been simplified and their applicability greatly increased.
A choice was made to keep assumptions as general as possible, and the results can yield more when more is known of $V^{'},\,P^{'}$

For related future work, an idea that immediately presents itself is whether this machinery can be brought to bear on MV-SDE (\ref{proto}) with coloured noise. \cite{vaes} studied phase transitions using a small parameter expansion approach, to which our methodology can be employed to fully understand the individual correction terms.

\bibliographystyle{abbrv}
\bibliography{ggbiblio} 

\begin{thebibliography}{10}

\bibitem{alecio3}
A.~Alecio.
\newblock {C}ritical behaviour of a mean-field {SDE} with variable additive and
  multiplicative noise.
\newblock {\em Submitted}, 2023.

\bibitem{alecio2}
A.~Alecio.
\newblock {S}tationary {P}roperties of the {G}auss-{G}alerkin {QMOM}
  {T}runcation of ({MV}-){SDE}s.
\newblock {\em Submitted}, 2023.

\bibitem{agp}
A.~Alecio, G.~Pavliotis, V.~Lucarini, and N.~Zagli.
\newblock Dimension reduction of noisy interacting systems.
\newblock {\em Phys. Rev. Res.}, 5:013078, Feb 2023.

\bibitem{bell}
R.~Bellman and R.~Wilcox.
\newblock Truncation and {P}reservation of {M}oment {P}roperties for
  {F}okker-{P}lanck {M}oment {E}quations.
\newblock {\em Journal of Mathematical Analysis and Applications}, 32:532--542,
  1970.

\bibitem{rocky}
V.~Bogachev, M.~R\"ockner, and S.~Shaposhnikov.
\newblock Convergence in variation of solutions of nonlinear
  {F}okker–{P}lanck equations to stationary measures.
\newblock {\em Journal of functional analysis}, 276(12):3681--3713, 2019.

\bibitem{CGM}
P.~Cattiaux, A.~Guillin, and F.~Malrieu.
\newblock Probabilistic approach for granular media equations in the
  non-uniformly convex case.
\newblock {\em Probability Theory and Related Fields}, 140:19--40, 2007.

\bibitem{dawson}
D.~Dawson.
\newblock Critical dynamics and fluctuations for a mean-field model of
  cooperative behavior.
\newblock {\em Journal of Statistical Physics}, 31(1):29--85, 1983.

\bibitem{desi}
R.~Desai and R.~Zwansig.
\newblock Statistical mechanics of a nonlinear stochastic model.
\newblock {\em J. Stat. Phys.}, 34(2):443--484, 1978.

\bibitem{eberle}
A.~Eberle, A.~Guillin, and R.~Zimmer.
\newblock Quantitative {H}arris-type theorems for diffusions and
  {M}ckean--{V}lasov processes.
\newblock {\em Transactions of the American Mathematical Society},
  371(10):7135--7173, 2019.

\bibitem{frank}
T.~Frank.
\newblock {\em Nonlinear Fokker-Planck Equations}.
\newblock Series in Synergetics. Springer, 2005.

\bibitem{ss}
J.~Garnier, G.~Papanicolaou, and T.-W. Yang.
\newblock Large deviations for a mean field model of systemic risk.
\newblock {\em SIAM Journal of Financial Mathematics}, 4(1):151--184, 2013.

\bibitem{gomes}
S.~Gomes and G.~Pavliotis.
\newblock Mean field limits for interacting diffusions in a two-scale
  potential.
\newblock {\em Journal of Nonlinear Science}, 28(3):905--941, 2018.

\bibitem{vaes}
S.~N. {Gomes}, G.~A. {Pavliotis}, and U.~{Vaes}.
\newblock {Mean-field limits for interacting diffusions with colored noise:
  phase transitions and spectral numerical methods}.
\newblock {\em arXiv:1904.05973}, Apr 2019.

\bibitem{kome}
K.~Kometani and H.~Shimizu.
\newblock A {S}tudy of {S}elf-{O}rganizing {P}rocesses of {N}onlinear
  {S}tochastic {V}ariables.
\newblock {\em J. Stat. Phys.}, 13(6):473--490, 1975.

\bibitem{leo}
C.~L\'eonard.
\newblock Une loi des grands nombres pour des syst\`emes de diffusions avec
  interaction et \`a coefficients non born\'es.
\newblock {\em Annales de l{`}I. H. P., section B}, 22:237--262, 1986.

\bibitem{mckean}
H.~P. McKean.
\newblock Propagation of chaos for a class of non-linear parabolic equations.
\newblock {\em Stochastic Differential Equations (Lecture Series in
  Differential Equations, Session 7, Catholic Univ., 1967)}, pages 41--57,
  1967.

\bibitem{go}
R.~Pinnau, C.~Totzeck, and O.~Tse.
\newblock A consensus-based model for global optimization and its mean-field
  limit.
\newblock {\em Math. Models Methods Appl. Sci.}, 27(1):183--204, 2017.

\bibitem{rud}
W.~Rudin.
\newblock {\em Principles of Mathematical Analysis}.
\newblock McGraw-Hill, 1976.

\bibitem{shiino}
M.~Shiino.
\newblock Dynamical behavior of stochastic systems of infinitely many coupled
  nonlinear oscillators exhibiting phase transitions of mean-field type.
\newblock {\em Physical Review A}, 36(5), September 1987.

\bibitem{stroock}
D.~Stroock.
\newblock {\em Probability theory, an analytic view}.
\newblock Cambridge University Press, 1993.

\bibitem{sznit}
A.-S. Sznitman.
\newblock Free energy and the convergence of distributions of diffusion
  processes of {M}c{K}ean type.
\newblock {\em Ecole d{'}Et\'e de Probabilit\'es de Saint-Flour XIX–1989,
  Vol. 1464}, pages 165--251, 1991.

\bibitem{tam1}
Y.~Tamura.
\newblock On asymptotic behaviors of the solution of a nonlinear diffusion
  equation.
\newblock {\em J. Fac. Sci. Univ. Tokyo Sect. IA Math.}, 31(1):195--221, 1984.

\bibitem{tam2}
Y.~Tamura.
\newblock Free energy and the convergence of distributions of diffusion
  processes of {M}c{K}ean type.
\newblock {\em J. Fac. Sci. Univ. Tokyo Sect. IA Math.}, 34(2):443--484, 1987.

\bibitem{tug2}
J.~Tugaut.
\newblock Convergence to the equilibria for self-stabilizing processes in
  double-well landscapes.
\newblock {\em Annals of Probability}, 41(3A):117--132, 2013.

\bibitem{tug}
J.~Tugaut.
\newblock Phase transitions of {M}c{K}ean{–}{V}lasov processes in
  double-wells landscape.
\newblock {\em Stochastics}, 86(2):257--284, 2014.

\end{thebibliography}

\end{document}